\numberwithin{equation}{section}
  \newtheorem{theorem}{Theorem}[section]
  \newtheorem{proposition}[theorem]{Proposition}
  \newtheorem{lemma}[theorem]{Lemma}
  \newtheorem{corollary}[theorem]{Corollary}
  \newtheorem{definition}[theorem]{Definition}
  \newtheorem{example}[theorem]{Example}
\title[Quasi Generalized CR-lightlike Submanifolds]{Quasi Generalized CR-lightlike submanifolds of indefinite nearly Sasakian manifolds}
\author[Fortun\'{e} Massamba, Samuel Ssekajja ]{Fortun\'{e} Massamba*, Samuel Ssekajja**}
\newcommand{\acr}{\newline\indent}
\address{\llap{*\,} School of Mathematics, Statistics and Computer Science\acr
 University of KwaZulu-Natal\acr
 Private Bag X01, Scottsville 3209\acr
South Africa}
\email{massfort@yahoo.fr, Massamba@ukzn.ac.za} 
\thanks{}
\address{\llap{**\,} School of Mathematics, Statistics and Computer Science\acr
 University of KwaZulu-Natal\acr
 Private Bag X01, Scottsville 3209\acr
South Africa}
\email{ssekajja.samuel.buwaga@aims-senegal.org} 
\thanks{}
\subjclass[2010]{Primary 53C25; Secondary 53C40, 53C50}
\keywords{Nearly Sasakian manifolds, contact CR-lightlike submanifolds, totally umbilical and totally geodesic lightlike submanifolds.}
\begin{document}
 
\begin{abstract} 
In this paper, we introduce and study a new class of CR-lightlike submanifold of an indefinite nearly Sasakian manifold, called Quasi Generalized Cauchy-Riemann (QGCR) lightlike submanifold. We give some characterization theorems for the existence of QGCR-lightlike submanifolds and finally derive necessary and sufficient conditions for  some distributions to be integrable. 
\end{abstract}

\maketitle

\section{Introduction}   
 
Blair \cite{bl2} introduced the notions of a nearly Sasakian manifold as a special class of almost contact metric manifolds. An indefinite nearly Sasakian manifold differs from  an indefinite Sasakian manifold, since in the former, the manifold is not necessarily normal \cite{bl1}. In fact, any normal nearly Sasakian manifold is Sasakian  (see \cite{bl2} and references therein for more details). From then, many papers have appeared on these manifolds and their submanifolds \cite{bl1}, \cite{be}, \cite{mo} and \cite{ta}. In these papers, the geometry is restricted to a Riemannian case and thus, little or no attempt has been made to investigate their lightlike (null) cases. Lightlike geometry has its applications in mathematical physics, in particular, general relativity and electromagnetism \cite{db}. Differential geometry of lightlike submanifolds was introduced by Bejancu and Duggal \cite{db} and later studied by many authors  \cite{dj}, \cite{ds3}, \cite{ds4}, \cite{ma1}, \cite{ma2}, \cite{ma3} and references therein.

In \cite{ds3}, the authors initiated the study of Generalized Cauchy-Riemann (GCR) lightlike submanifolds of an indefinite Sasakian manifold, in which the structural vector field, $\xi$, of the almost contact metric structure $(\overline{\phi},\eta,\xi,\overline{g})$ was assumed to be tangent to the submanifold. Moreover, when $\xi$ is tangent to the submanifold, Calin \cite{ca} proved that it belongs to the screen distribution. However, the structural vector field is globally defined on the tangent bundle of the ambient manifold, which implies that other classes of submanifolds with non-tangential structural vector fields are certainly possible. Recently, a few  papers have been published on the subject, focusing on ascreen and generic lightlike submanifolds (\cite{ds4}, \cite{ji}). Thus, the absence of evidence of research in the geometry of lightlike submanifolds of nearly Sasakian manifolds and the fact that $\xi$ belongs to the tangent bundle of the ambient space has motivated us to introduce a new class of CR-lightlike submanifold of a nearly Sasakian manifold, known as Quasi Generalized Cauchy-Riemann (QGCR) lightlike submanifold.

The objective of this paper is to characterize totally umbilical and totally geodesic QGCR-lightlike submanifolds of a nearly Sasakian manifold. The rest of the paper is organized as follows. In Section \ref{Preli}, we present the basic notions of nearly Sasakian structures and lightlike submanifolds which we refer to in the remaining sections. In Section \ref{AlmostGe}, we introduce QGCR-lightlike submanifolds. Section \ref{existence} is devoted to the non-existance theorms regarding totally umbilical and totally geodesic QGCR-lightlike submanifolds. Finally, in Section \ref{Integra} we derive the necessarily and sufficient conditions for the integrability of the key distributions of a QGCR-lightlike submanifold of an indefinite nearly Sasakian manifold.

\section{Preliminaries}\label{Preli}

Let $\overline{M}$ be a $(2n + 1)$-dimensional manifold endowed with an almost contact structure $(\overline{\phi}, \xi, \eta)$, i.e. $\overline{\phi}$ is a tensor field of type $(1, 1)$, $\xi$ is a vector field, and $\eta$ is a 1-form satisfying
\begin{equation}\label{equa1}
\overline{\phi}^{2} = -\mathbb{I} + \eta
\otimes\xi,\;\;\eta(\xi)= 1 ,\;\;\eta\circ\overline{\phi} =
0 \;\;\mbox{and}\;\;\overline{\phi}(\xi) = 0.
\end{equation}
Then $(\overline{\phi}, \xi, \eta,\,\overline{g})$ is called an indefinite almost contact metric structure on $\overline{M}$ if $(\overline{\phi}, \xi, \eta)$ is an almost contact structure on $\overline{M}$ and $\overline{g}$ is a semi-Riemannian metric on $\overline{M}$ such that \cite{bcgh}, for any vector field $\overline{X}$, $\overline{Y}$ on  $\overline{M}$,
\begin{equation}\label{equa2}
 \overline{g}(\overline{\phi}\,\overline{X}, \overline{\phi}\,\overline{Y}) = \overline{g}(\overline{X}, \overline{Y}) -  \eta(\overline{X})\,\eta(\overline{Y}),
\end{equation}
It follows that, for any vector $\overline{X}$ on $\overline{M}$,
 \begin{equation}\label{equa3}
  \eta(\overline{X}) =  \overline{g}(\xi,\overline{X}).
 \end{equation}

If, moreover, 
\begin{equation}\label{eqz}
         (\overline{\nabla}_{\overline{X}} \overline{\phi})\overline{Y}+(\overline{\nabla}_{\overline{Y}} \overline{\phi})\overline{X}=2\overline{g}(\overline{X}, \overline{Y})\xi-\eta(\overline{Y})\overline{X}-\eta(\overline{X})\overline{Y},
    \end{equation}
for any vector fields $\overline{X}$, $\overline{Y}$ on $\overline{M}$, where $\overline{\nabla}$ is the Levi-Civita connection for the semi-Riemannian metric $\overline{g}$, we call $\overline{M}$ an \textit{indefinite nearly Sasakian manifold}.   

We denote by $\Gamma(\Xi)$ the set of smooth sections of the vector bundle $\Xi$. Let $\Omega$ be the fundamental 2-form of $\overline{M}$ defined by
\begin{equation}
 \Omega(\overline{X}, \overline{Y}) = \overline{g}(\overline{X}, \overline{\phi}\,\overline{Y}),\;\;\overline{X},\;\overline{Y}\in\Gamma(T \overline{M}).
\end{equation}
Replacing $\overline{Y}$ by $\xi$ in (\ref{eqz}) we obtain
   \begin{equation}
    \overline{\nabla}_{\overline{X}} \xi-\overline{\phi}^2\overline{\nabla}_\xi \overline{X}+\overline{\phi}\,\overline{\nabla}_\xi \overline{\phi}\,\overline{X} =-\overline{\phi}\,\overline{X},\label{eq10}
   \end{equation}
for any $\overline{X}\in \Gamma (T\overline{M})$.

Introduce a (1,1)-tensor $\overline{H}$ on $\overline{M}$ taking
\begin{equation*}
 \overline{H}\,\overline{X}=-\overline{\phi}^2\overline{\nabla}_\xi \overline{X} + \overline{\phi}\,\overline{\nabla}_\xi \overline{\phi}\,\overline{X}, 
\end{equation*}
for any $\overline{X}\in \Gamma (T\overline{M})$, such that (\ref{eq10}) reduces to 
   \begin{equation}\label{v10}
    \overline{\nabla}_{\overline{X}} \xi+\overline{H}\,\overline{X} =-\overline{\phi}\, \overline{X}.
   \end{equation}
\begin{lemma}
 The linear operator $\overline{H}$ has the properties
 \begin{align}
  & \overline{H}\,\overline{\phi} + \overline{\phi}\,\overline{H}=0,\;\;\overline{H}\xi=0,\;\;\eta\circ \overline{H}=0, \nonumber\\
  \mbox{and}\;\;& \overline{g}(\overline{H}\,\overline{X}, \overline{Y})=-\overline{g}(\overline{X}, \overline{H}\,\overline{Y})\;\;\;\; (\mbox{i.e.}\;\; \overline{H} \;\;\mbox{is skew-symmetric}).
 \end{align}
 \end{lemma}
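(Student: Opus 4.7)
The plan is to derive the four identities from the defining formula
$\overline{H}\,\overline{X}=-\overline{\phi}^2\overline{\nabla}_\xi \overline{X} + \overline{\phi}\,\overline{\nabla}_\xi \overline{\phi}\,\overline{X}$
by repeated use of (\ref{equa1}), (\ref{equa2}) and (\ref{eq10}). The linchpin is the preliminary fact $\overline{\nabla}_\xi\xi=0$: setting $\overline{X}=\xi$ in (\ref{eq10}) and using $\overline{\phi}\xi=0$ collapses the identity to $(\mathbb{I}-\overline{\phi}^2)\overline{\nabla}_\xi\xi=0$; invoking $\overline{\phi}^2=-\mathbb{I}+\eta\otimes\xi$ rewrites this as $2\overline{\nabla}_\xi\xi=\eta(\overline{\nabla}_\xi\xi)\xi$, and applying $\eta$ together with $\eta(\xi)=1$ forces $\eta(\overline{\nabla}_\xi\xi)=0$, whence $\overline{\nabla}_\xi\xi=0$.

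Granted this, the first three identities are routine. For $\overline{H}\xi=0$, both summands of the defining formula evaluated at $\overline{X}=\xi$ vanish by $\overline{\phi}\xi=0$ and $\overline{\nabla}_\xi\xi=0$. For $\eta\circ\overline{H}=0$, apply $\eta$ termwise: $\eta\circ\overline{\phi}=0$ dispatches the second summand, and in the first $\overline{\phi}^2\overline{\nabla}_\xi\overline{X}=-\overline{\nabla}_\xi\overline{X}+\eta(\overline{\nabla}_\xi\overline{X})\xi$ produces two contributions that cancel. For $\overline{H}\,\overline{\phi}+\overline{\phi}\,\overline{H}=0$, substitute the definition into both $\overline{H}\,\overline{\phi}\,\overline{X}$ and $\overline{\phi}\,\overline{H}\,\overline{X}$, use $\overline{\phi}^3=-\overline{\phi}$ (an immediate consequence of (\ref{equa1})), and discard the $\eta$-tails arising from $\overline{\nabla}_\xi\overline{\phi}^2\overline{X}$ thanks to $\overline{\phi}\xi=0$ and $\overline{\nabla}_\xi\xi=0$; the four remaining terms then cancel pairwise.

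The skew-symmetry is the main obstacle. I will first extract from (\ref{equa2}), via the substitution $\overline{X}\mapsto\overline{\phi}\,\overline{X}$ together with $\eta\circ\overline{\phi}=0$ and $\overline{\phi}^2=-\mathbb{I}+\eta\otimes\xi$, the skew-symmetry $\overline{g}(\overline{\phi}\,\overline{X},\overline{Y})=-\overline{g}(\overline{X},\overline{\phi}\,\overline{Y})$ of $\overline{\phi}$; this also makes $\overline{\phi}^2$ self-adjoint. Expanding $\overline{g}(\overline{H}\,\overline{X},\overline{Y})+\overline{g}(\overline{X},\overline{H}\,\overline{Y})$ and transferring $\overline{\phi}^2$ and $\overline{\phi}$ across via these adjointness relations, the mixed $\overline{\phi}\,\overline{\nabla}_\xi\overline{\phi}$ pieces combine, by metric compatibility of $\overline{\nabla}$, into $\xi\bigl(\overline{g}(\overline{\phi}\,\overline{X},\overline{\phi}\,\overline{Y})\bigr)$, which by (\ref{equa2}) equals $\xi\bigl(\overline{g}(\overline{X},\overline{Y})-\eta(\overline{X})\eta(\overline{Y})\bigr)$; the $\overline{\phi}^2$ pieces unfold similarly. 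All non-$\eta$ contributions cancel, and the remaining $\eta$-corrections collapse to combinations of $\xi\eta(\overline{Z})-\eta(\overline{\nabla}_\xi\overline{Z})=(\overline{\nabla}_\xi\eta)(\overline{Z})=\overline{g}(\overline{\nabla}_\xi\xi,\overline{Z})$, which vanish by the first step. The delicate point is the bookkeeping of the $\eta$-corrections, which cancel \emph{precisely because} $\overline{\nabla}_\xi\xi=0$; equivalently, via (\ref{v10}) the claim amounts to $\xi$ being a Killing vector field, a standard feature of nearly Sasakian geometry.
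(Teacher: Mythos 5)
Your proposal is correct and is precisely the ``straightforward calculation'' the paper invokes without writing out: all four identities follow from the definition of $\overline{H}$ together with (\ref{equa1}), (\ref{equa2}), metric compatibility of $\overline{\nabla}$, and the skew-symmetry $\overline{g}(\overline{\phi}\,\overline{X},\overline{Y})=-\overline{g}(\overline{X},\overline{\phi}\,\overline{Y})$, with the $\eta$-corrections collapsing via $(\overline{\nabla}_\xi\eta)(\overline{Z})=\overline{g}(\overline{\nabla}_\xi\xi,\overline{Z})=0$. Note only that the paper records $\overline{\nabla}_\xi\xi=0$ in (\ref{p}) \emph{after} the lemma (deducing it from $\overline{H}\xi=0$ via (\ref{v10})), whereas you need it beforehand; your derivation of it directly from (\ref{eq10}) avoids any circularity, so the ordering is harmless.
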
 
 \begin{proof}
  The proof follows from a straightforward calculation.
 \end{proof}
 By (\ref{v10}), it is easy to check that 
\begin{equation}\label{p}
 \overline{\nabla}_\xi \xi=0.
\end{equation} 
 The fundamental 2-form $\Omega$ and the 1-form $\eta$ are related as follows.
\begin{lemma}
Let $(\overline{M}, \overline{\phi}, \xi, \eta,\,\overline{g})$ be an indefinite nearly Sasakian manifold. Then,  
 \begin{equation}\label{RelaOmegaETa}
  \Omega(\overline{X}, \overline{Y}) = d\eta(\overline{X}, \overline{Y}) + \overline{g}(\overline{H}\, \overline{X}, \overline{Y}),
 \end{equation}
 for any $\overline{X}$, $\overline{Y}\in\Gamma(T \overline{M})$. Moreover, $\overline{M}$ is Sasakian if and only if $\overline{H}$ vanishes identically on $\overline{M}$.
\end{lemma}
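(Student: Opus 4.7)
The plan is to establish the two assertions separately. Both sides of (\ref{RelaOmegaETa}) are alternating $2$-tensors (since $\overline{H}$ is skew-symmetric and $\Omega(\overline{X},\overline{Y}) = \overline{g}(\overline{X},\overline{\phi}\,\overline{Y}) = -\overline{g}(\overline{\phi}\,\overline{X},\overline{Y})$ by (\ref{equa2})), so the computation can be performed symbolically on an arbitrary pair $(\overline{X},\overline{Y})$ and the formula (\ref{v10}) is the natural bridge between $\overline{\phi}$, $\overline{H}$, and $\overline{\nabla}\xi$.

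For the identity, I would start from the standard formula for the exterior derivative of a 1-form in terms of the Levi--Civita connection, namely
\begin{equation*}
d\eta(\overline{X},\overline{Y})=\tfrac12\bigl\{(\overline{\nabla}_{\overline{X}}\eta)(\overline{Y})-(\overline{\nabla}_{\overline{Y}}\eta)(\overline{X})\bigr\},
\end{equation*}
and rewrite $(\overline{\nabla}_{\overline{X}}\eta)(\overline{Y}) = \overline{g}(\overline{\nabla}_{\overline{X}}\xi,\overline{Y})$ via (\ref{equa3}) and metric compatibility. Substituting (\ref{v10}) gives $\overline{g}(\overline{\nabla}_{\overline{X}}\xi,\overline{Y})=-\overline{g}(\overline{\phi}\,\overline{X},\overline{Y})-\overline{g}(\overline{H}\,\overline{X},\overline{Y})$. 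Subtracting the corresponding expression with $\overline{X}$ and $\overline{Y}$ swapped and using the antisymmetries of $\overline{\phi}$ and $\overline{H}$ collapses four terms into $2\Omega(\overline{X},\overline{Y})-2\overline{g}(\overline{H}\,\overline{X},\overline{Y})$, which yields the claimed identity after dividing by two.

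For the second assertion, the forward direction is immediate: a Sasakian manifold satisfies $\overline{\nabla}_{\overline{X}}\xi=-\overline{\phi}\,\overline{X}$, and comparing with (\ref{v10}) forces $\overline{H}\,\overline{X}=0$ for every $\overline{X}$. For the converse, assume $\overline{H}\equiv 0$; then (\ref{v10}) reduces to $\overline{\nabla}_{\overline{X}}\xi=-\overline{\phi}\,\overline{X}$. Set $\overline{Y}=\xi$ in (\ref{eqz}): since $\overline{\phi}\xi=0$ one computes $(\overline{\nabla}_{\overline{X}}\overline{\phi})\xi=-\overline{\phi}(\overline{\nabla}_{\overline{X}}\xi)=\overline{\phi}^2\overline{X}=-\overline{X}+\eta(\overline{X})\xi$, and substituting into (\ref{eqz}) gives $(\overline{\nabla}_{\xi}\overline{\phi})\overline{X}=0$. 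This extra symmetry, combined with the already symmetric nearly Sasakian relation (\ref{eqz}), is exactly what is needed to upgrade (\ref{eqz}) to the Sasakian identity $(\overline{\nabla}_{\overline{X}}\overline{\phi})\overline{Y}=\overline{g}(\overline{X},\overline{Y})\xi-\eta(\overline{Y})\overline{X}$, by decomposing the tensor $A(\overline{X},\overline{Y})=(\overline{\nabla}_{\overline{X}}\overline{\phi})\overline{Y}-\overline{g}(\overline{X},\overline{Y})\xi+\eta(\overline{Y})\overline{X}$ into its symmetric and skew-symmetric parts in $(\overline{X},\overline{Y})$.

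The main obstacle will be the converse in the Sasakian characterization: the nearly Sasakian condition (\ref{eqz}) controls only the symmetrization $A(\overline{X},\overline{Y})+A(\overline{Y},\overline{X})$, so ruling out a nonzero skew-symmetric part requires either a curvature identity coming from differentiating $\overline{\nabla}_{\overline{X}}\xi=-\overline{\phi}\,\overline{X}$, or the classical Blair argument using $\overline{\nabla}_\xi\overline{\phi}=0$ together with the skew-adjointness of $\overline{\phi}$ to force $A\equiv 0$. The first identity (\ref{RelaOmegaETa}), by contrast, is a direct three-line computation.
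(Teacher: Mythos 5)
Your computation of the identity is correct and is precisely the ``straightforward calculation'' the paper invokes without writing out: expand $d\eta$ via the Levi--Civita connection (with the $\tfrac12$ convention), substitute (\ref{v10}), and use the skew-adjointness of $\overline{\phi}$ and $\overline{H}$. For the second assertion the paper likewise gives no argument and simply cites Theorem 3.2 of Blair--Showers--Yano, which is exactly the ``classical Blair argument'' you correctly identify as the missing ingredient for the converse; your easy direction and the derivation of $\overline{\nabla}_{\xi}\overline{\phi}=0$ are both sound, so your proposal matches the paper's route, only with more of the details actually carried out.
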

 \begin{proof}
  The relation (\ref{RelaOmegaETa}) follows from a straightforward calculation. The second assertion follows from Theorem 3.2 in \cite{bl1}.
 \end{proof}
Note that, for any $\overline{X}$, $\overline{Y}$, $\overline{Z}\in\Gamma(T \overline{M})$,
\begin{equation}
 \overline{g}((\overline{\nabla}_{\overline{Z}}\overline{\phi})\overline{X}, \overline{Y}) = -  \overline{g}( \overline{X}, (\overline{\nabla}_{\overline{Z}}\overline{\phi})\overline{Y}).
\end{equation}
This means that the tensor $\overline{\nabla}\,\overline{\phi}$ is skew-symmetric.
 
  Let $(\overline{M},\overline{g})$ be an $(m + n)$-dimensional semi-Riemannian manifold of constant index $\nu$, $1\le \nu\le m+n$ and $M$ be a submanifold of $\overline{M}$ of codimension $n$. We assume that both $m$ and $n$ are $\ge 1$. At a point $p\in M$, we define the orthogonal complement $T_{p} M^{\perp}$ of the tangent space $T_{p} M$ by
 $$
 T_{p} M^{\perp} = \{X\in\Gamma(T_{p} M): \overline{g}(X, Y)=0,\; \forall \, Y\in\Gamma(T_{p} M)\}.
 $$
 We put $\mathrm{Rad} \, T_{p} M = \mathrm{Rad}\, T_{p} M^{\perp} = T_{p} M \cap T_{p} M^{\perp}$. The submanifold $M$ of $\overline{M}$ is said to be $r$-lightlike submanifold (one supposes that the index of $\overline{M}$ is $\nu \ge r$), if the mapping 
 $$
 \mathrm{Rad} \, T M: p\in M \longrightarrow\mathrm{Rad}\, T_{p} M 
 $$
 defines a smooth distribution on $M$ of rank $r > 0$. We call $\mathrm{Rad}\,T M$ the radical distribution on $M$. In the sequel, an $r$-lightlike submanifold will simply be called a \textit{lightlike submanifold} and $g$ is \textit{lightlike metric}, unless we need to specify $r$.
 
 Let $S(T M)$ be a screen distribution which is a semi-Riemannian complementary distribution of $\mathrm{Rad}\,T M$ in $T M$, that is,
 \begin{equation}\label{eq05}
  T M = \mathrm{Rad}\,T M \perp S(T M).
 \end{equation} 
Choose a screen transversal bundle $S(TM^\perp)$, which is semi-Riemannian and complementary to $\mathrm{Rad}\, TM$ in $TM^\perp$. Since, for any local basis $\{E_i \}$ of  $\mathrm{Rad}\,TM$, there exists a local null frame $\{N_i\}$ of sections with values in the orthogonal complement of $S(T M^\perp)$ in $S(T M )^\perp$  such that $g(E_i , N_j ) = \delta_{ij}$, it follows that there exists a lightlike transversal vector bundle $l\mathrm{tr}(TM)$ locally spanned by $\{N_i\}$ \cite{db}. 

Let $\mathrm{tr}(TM)$ be complementary (but not orthogonal) vector bundle to $TM$ in $T\overline{M}$. Then, 
\begin{align}\label{eq04}
           &\mathrm{tr}(TM)=l\mathrm{tr}(TM)\perp S(TM^\perp),\\\label{eq08}
  T\overline{M}= & S(TM)\perp S(TM^\perp)\perp\{\mathrm{Rad}\, TM\oplus l\mathrm{tr}(TM)\} .
\end{align}
Note that the distribution $S(TM)$ is not unique, and is canonically isomorphic to the factor vector bundle $TM/ \mathrm{Rad}\, TM$. \cite{ds3} 
 
We say that a lightlike submanifold $M$ of $\overline{M}$ is 
\begin{enumerate}
 \item $r$-lightlike if $1\leq r< min\{m,n\}$;
 \item co-isotropic if $1\leq r=n<m$,  $S(TM^\perp)=\{0\}$;
 \item isotropic if $1\leq r=m<n$,  $S(TM)=\{0\}$;
 \item totally lightlike if $r=n=m$,  $S(TM)=S(TM^\perp)=\{0\}$.
\end{enumerate}
Similar to \cite{ds4}, we use the following range of indices in this paper,
\begin{equation*}
 i,j,k\in\{1,\ldots, r\}, \hspace{.2cm}\alpha,\beta,\gamma\in\{r+1,\ldots, n\}.
\end{equation*}

Consider a local quasi-orthonormal fields of frames of $\overline{M}$ along $M$, on $U$ as 
\begin{equation*}
\{ E_1,\cdots, E_r,N_1,\cdots, N_r,X_{r+1},\cdots,X_{m},W_{1+r},\cdots, W_{n}\},
\end{equation*}
where
 $\{X_{r+1},\cdots,X_{m}\}$ and $\{W_{1+r},\ldots, W_n\}$ are respectively othornomal bases of $\Gamma(S(TM)|_{U})$ and $\Gamma(S(TM^{\perp})|_{U})$  and that $\epsilon_\alpha=\overline{g}(W_\alpha,W_\alpha)$ be the signatures of $W_\alpha$.
 
Let $P$ be the projection morphism of $TM$ on to $S(TM)$. Using the decomposition (\ref{eq08}), consider the projection morphisms $L$ and $S$ of
$\mathrm{tr}(T M )$ on $l\mathrm{tr}(T M)$ and $S(T M^{\perp})$, respectively. Then,  the Gauss-Wiengartein equations \cite{ds2} of an $r$-lightlike submanifold $M$ and $S(TM)$ are the following;
  \begin{align}
     & \overline{\nabla}_X Y=\nabla_X Y+\sum_{i=1}^r h_i^l(X,Y)N_i+\sum_{\alpha=r+1}^n h_\alpha^s(X,Y)W_\alpha,\label{eq11}\\
     & \overline{\nabla}_X N_i=-A_{N_i} X+\sum_{j=1}^r \tau_{ij}(X) N_j+\sum_{\alpha=r+1}^n \rho_{i\alpha}(X)W_\alpha,\label{eq31}\\
     & \overline{\nabla}_X W_\alpha=-A_{W_\alpha} X+\sum_{i=1}^r \varphi_{\alpha i}(X) N_i+\sum_{\beta=r+1}^n \sigma_{\alpha\beta}(X)W_\beta,\label{eq32}\\
     & \nabla_X P Y=\nabla_X^* PY+\sum_{i=1}^r h_i^*(X, P Y)E_i,\\
     & \nabla_X E_i=-A_{E_i}^* X-\sum_{j=1}^r \tau_{ji}(X) E_j,\;\;\;\; \forall\; X,Y\in \Gamma(TM)\label{eq50},
  \end{align}
where $h^{l}(X, Y)= L h(X, Y)$, $h^{s}(X, Y)= S h(X, Y)$,  $\nabla$ and $\nabla^*$ are the induced connections on $TM$ and $S(TM)$ respectively, $h_i^l$ and $h_\alpha^s$ are symmetric bilinear forms known as \textit{local lightlike} and \textit{screen fundamental} forms of $TM$ respectively. Also $h_i^*$ are the \textit{ second fundamental forms} of $S(TM)$. $A_{N_i}$, $A_{E_i}^*$ and $A_{W_\alpha}$ are linear operators on $TM$ while $\tau_{ij}$, $\rho_{i\alpha}$, $\varphi_{\alpha i}$ and $\sigma_{\alpha\beta}$ are 1-forms on $TM$. It is known \cite{db, ds2} that 

\begin{equation}\label{13}
 h_i^l(X,Y)= \overline{g}(\overline{\nabla}_X Y,E_i),\hspace{.3cm} \forall X,Y\in \Gamma(TM),
\end{equation}
from which we deduce the independence of $h_i^l$ on  the choice of $S(TM)$. 

The \textit{second fundamental tensor} of $M$ is given by
 \begin{equation}\label{h1}
 h(X,Y)=\sum_{i=1}^r h_i^l(X,Y)N_i+\sum_{\alpha=r+1}^n h_\alpha^s(X,Y)W_\alpha,
\end{equation}
for any $X,Y\in \Gamma(TM)$.  It is easy to see that  $\nabla^*$ is a metric connection on $S(TM)$ while $\nabla$ is generally not a metric connection and is given by
      \begin{equation*}
         (\nabla_X g)(Y,Z)=\sum_{i=1}^r\{h_i^l(X,Y)\lambda_i(Z)+h_i^l(X,Z)\lambda_i(Y)\},
      \end{equation*}
for any $X,Y\in \Gamma(TM)$ and $\lambda_i$ are 1-forms given by
      \begin{equation}\label{eqc}
          \lambda_i(X)=\overline{g}(X,N_i), \hspace{.2cm} \forall X\in \Gamma(TM).
      \end{equation}
The above three local second fundamental forms are related to their shape operators by the following set of equations
      \begin{align}
         & g(A_{E_i}^*X,Y)=h_i^l(X,Y)+\sum_{j=1}^rh_j^l(X,E_i)\lambda_j(Y),\hspace{.1cm}\bar{g}(A_{E_i}^*X,N_j)=0,\label{eqe}\\
         & g(A_{W_\alpha}X,Y)=\epsilon_\alpha h_\alpha^s(X,Y)+\sum_{i=1}^r \varphi_{\alpha i}(X)\lambda_i(Y),\label{eq07}\\
         & \bar{g}(A_{W_\alpha}X,N_i)=\epsilon_\alpha \rho_{i\alpha}(X),\\
         & g(A_{N_i}X,Y)=h_i^*(X,\mathcal{P}Y),\;\;\; \lambda_j(A_{N_i}X)+\lambda_i(A_{N_j}X)=0\label{k},
      \end{align}
for any $X,Y\in \Gamma(TM)$. 

For any $r$-lightlike submanifold, replacing $Y$ by $E_i$ in (\ref{eq07}), we get
         \begin{equation}\label{eqg}
             h_\alpha^s(X,E_i)=-\epsilon_\alpha \varphi_{\alpha i}(X), \hspace{.3cm} \forall X\in \Gamma(TM).
         \end{equation}
         
 A lightlike submanifold $(M,g)$, of a semi-Riemannian manifold $(\overline{M},\overline{g})$ is said to be totally umbilical in $\overline{M}$ \cite{ds2} if there is a smooth transversal vector field $\mathcal{H}\in \Gamma(\mathrm{tr}(TM))$, called the transversal curvature vector of $M$ such that 
          \begin{align}\label{eq17}
             h(X,Y)=\mathcal{H}g(X,Y),
          \end{align}
for all  $X,Y\in \Gamma(TM)$. 

Moreover, it is easy to see that $M$ is totally umbilical in $\overline{M}$, if and only if on each coordinate neighborhood $\mathscr{U}$ there exist  smooth vector fields $\mathscr{H}^l\in\Gamma(l\mathrm{tr}(TM))$ and $\mathscr{H}^s\in\Gamma(S(TM^\perp))$ and smooth functions $\mathscr{H}_i^l\in F(l\mathrm{tr}(TM))$ and $\mathscr{H}_\alpha^s\in F(S(TM^\perp))$ such that  
  \begin{align}
   h^l(X,Y) & =\mathscr{H}^l g(X,Y),\;\;\; h^s(X,Y)=\mathscr{H}^s g(X,Y),\nonumber\\
   h_i^l(X,Y) & =\mathscr{H}_i^l g(X,Y),\;\;\;h_\alpha^s(X,Y)=\mathscr{H}_\alpha^s g(X,Y),
  \end{align} 
for all $X,Y\in\Gamma(TM)$. 

The above definition is independent of the choice of the screen distribution.

\section{Quasi Generalized CR-lightlike Submanifolds}\label{AlmostGe}

Generally, the structure vector field $\xi$ belongs to $T\overline{M}$. Therefore, we define it according to decomposition  (\ref{eq08}) as follows;
         \begin{equation}\label{eq2}
              \xi=\xi_S+\xi_{S^\perp}+\xi_R+\xi_l,
         \end{equation}
where $\xi_S$ is a smooth vector field of $S(TM)$ and $\xi_{S^\perp}$, $\xi_R$, $\xi_l$ are  defined as follows

          \begin{equation}\label{eq0}
              \xi_{R}=\sum_{i=1}^ra_i E_i, \;\; \xi_{l}=\sum_{i=1}^rb_i N_i,  \;\; 
              \xi_{S^\perp}=\sum_{\alpha=r+1}^nc_\alpha W_\alpha
          \end{equation}
with $a_i=\eta(N_i)$, $b_i=\eta(E_i)$ and  $c_\alpha=\epsilon_\alpha\eta(W_\alpha)$ all smooth functions on $\overline{M}$.

Generalised Cauchy Riemann (GCR) lightlike submanifolds were introduced in \cite{ds2, ds3}, in which the structure vector field $\xi$ was assumed tangent to the submanifold. Contrary to this assumption, we introduce a special class of $CR$-lightlike submanifold in which $\xi$ belongs to $T\overline{M}$, called  \textit{Quasi Generalized Cauchy-Riemann (QGCR)-lightlike submanifold} as follows.
\begin{definition}\label{def2}{\rm
Let $(M,g,S(TM),S(TM^\perp))$  be a lightlike submanifold of an indefinite nearly Sasakian manifold $(\overline{M}, \overline{g}, \overline{\phi},\xi,\eta)$. We say that $M$ is Quasi Generalized Cauchy-Riemann (QGCR)-lightlike submanifold of $\overline{M}$ if the following conditions are satisfied:
\begin{enumerate}
 \item [(i)] there exist two distributions $D_1$ and $D_2$ of $Rad(TM)$  such that 
        \begin{equation}\label{eq03}
             \mathrm{Rad}\, TM = D_1\oplus D_2, \;\;\overline{\phi} D_1=D_1, \;\;\overline{\phi} D_2\subset S(TM),
        \end{equation}
 \item [(ii)] there exist vector bundles $D_0$ and $\overline{D}$ over $S(TM)$ such that 
         \begin{align} 
              & S(TM)=\{\overline{\phi} D_2 \oplus \overline{D}\}\perp D_0,\\ 
              \mbox{with}\;\;\; &\overline{\phi}D_{0} \subseteq D_{0},\;\;   \overline{D}= \overline{\phi} \, \mathcal{S}\oplus \overline{\phi} \,\mathcal{L}, \label{s81}
         \end{align}    
\end{enumerate}
where $D_0$ is a non-degenerate distribution on $M$, $\mathcal{L}$ and $\mathcal{S}$ are respectively vector subbundles of $l\mathrm{tr}(TM)$  and $S(TM^{\perp})$.
}
\end{definition}

If $D_{1}\neq \{0\}$, $D_0\neq \{0\}$, $D_2\neq \{0\}$ and $\mathcal{S}\neq \{0\}$, then $M$ is called a \textit{proper QGCR-lightlike submanifold}.

Let $M$ be a QGCR-lightlike submanifold of an indefinite nearly Sasakian manifold $\overline{M}$. If the structure vector field $\xi$ is tangent to $M$, then, $\xi\in\Gamma(S(T M))$. The proof of this is similar to one given by Calin in Sasakian case \cite{ca}. In this case,  if $X\in\Gamma(\mathcal{S})$ and $Y\in\Gamma(\mathcal{L})$, then $\eta(X)=\eta(Y)=0$ and
$$
\overline{g}(\overline{\phi}X, \overline{\phi}Y) = \overline{g}(X, Y) - \eta(X)\eta(Y) =0,
$$
which reduces the direct sum  $ \overline{D}$ in (\ref{s81}) to the orthogonal direct sum $ \overline{D}= \overline{\phi} \,\mathcal{S}\perp \overline{\phi} \,\mathcal{L}$, and thus $ \overline{\phi} \,\overline{D}= \mathcal{S}\perp  \mathcal{L}$.  Since $\xi\in\Gamma(S(T M))$ and $\xi$ is neither a vector field in  $\overline{\phi} D_2$ nor in $\overline{D}$, $\xi$ is in $D_{0}$. By $\overline{\phi}D_{0} \subseteq D_{0}$, there exist a distribution $D_{0}'$ of rank $(rank(D_{0})-1)$ and satisfying $\overline{\phi}D_{0}' = D_{0}'$ such that $D_{0}=D_{0}'\perp\langle\xi\rangle$, where $\langle\xi\rangle$ is the 1-dimensional distribution spanned by $\xi$. Therefore, the QGCR-lightlike submanifold tangent to $\xi$ reverts to a GCR-lightlike submanifold \cite{ds3}. 

\begin{proposition}
 A QGCR-lightlike submanifold $M$ of an indefinite nearly Sasakian manifold $\overline{M}$ tangent to the structure vector field $\xi$ is a GCR-lightlike submanifold.
 \end{proposition}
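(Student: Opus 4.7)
The plan is to show that when $\xi$ is tangent to $M$, each component in the QGCR decomposition matches a component of the standard GCR-lightlike decomposition, so all the defining conditions of a GCR-lightlike submanifold are automatically satisfied. The authors essentially outline this argument just before the statement, so my job is to organize it into three clean steps.

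First I would verify that $\xi \in \Gamma(S(TM))$. Since $\xi$ is tangent to $M$, decomposition (\ref{eq2}) becomes $\xi = \xi_S + \xi_R$ with $\xi_R = \sum a_i E_i$, where $a_i = \eta(N_i) = \overline{g}(\xi,N_i)$. I would adapt Calin's argument from the Sasakian case: using $\overline{\phi}\xi = 0$, applying $\overline{\phi}$ to both sides of $\xi = \xi_S + \xi_R$, and exploiting (\ref{eq03}) together with $\eta(\xi) = 1$, one forces $\xi_R = 0$, so that $\xi = \xi_S \in \Gamma(S(TM))$. This is the step that requires a bit of care, since it uses both the $\overline{\phi}$-invariance properties of the radical components in Definition \ref{def2} and the identities (\ref{equa1})--(\ref{equa3}); however, the algebraic structure is identical to the Sasakian case, because only the almost contact metric identities are used, not (\ref{eqz}).

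Next I would refine the structure of $S(TM)$. With $\xi$ tangent to $M$ and $\mathcal{L} \subseteq l\mathrm{tr}(TM)$, $\mathcal{S} \subseteq S(TM^\perp)$, the 1-form $\eta$ vanishes on both $\mathcal{L}$ and $\mathcal{S}$ (indeed $\eta(N) = \overline{g}(\xi,N) = 0$ for $N \in \Gamma(\mathcal{L})$ since $\xi \in \Gamma(S(TM))$, and similarly for $\mathcal{S}$). Then the displayed computation $\overline{g}(\overline{\phi}X,\overline{\phi}Y) = \overline{g}(X,Y) - \eta(X)\eta(Y) = 0$ for $X \in \Gamma(\mathcal{S})$, $Y \in \Gamma(\mathcal{L})$ shows $\overline{\phi}\mathcal{S} \perp \overline{\phi}\mathcal{L}$, converting the direct sum in (\ref{s81}) into the orthogonal direct sum $\overline{D} = \overline{\phi}\mathcal{S} \perp \overline{\phi}\mathcal{L}$.

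Finally I would locate $\xi$ inside $D_0$ and extract the GCR decomposition. Since $\overline{\phi}\xi = 0$ while every non-zero element of $\overline{\phi}D_2$ or of $\overline{D}$ has a non-zero $\overline{\phi}$-image (by the invariance/cross-pairing structure of $D_2$, $\mathcal{L}$, $\mathcal{S}$), the component $\xi_S$ must lie in $D_0$. Using $\overline{\phi}D_0 \subseteq D_0$ and the non-degeneracy of $D_0$, I set $\langle\xi\rangle$ to be the line spanned by $\xi$ and take $D_0'$ to be its orthogonal complement inside $D_0$; invariance of $D_0$ under $\overline{\phi}$ together with $\overline{\phi}\xi = 0$ and skew-symmetry of $\overline{g}(\overline{\phi}\,\cdot\,,\cdot)$ on $D_0'$ yield $\overline{\phi}D_0' = D_0'$. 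The resulting decomposition $TM = \{D_1 \oplus D_2\} \perp \{\overline{\phi}D_2 \oplus \overline{D}\} \perp D_0' \perp \langle\xi\rangle$ with $\overline{\phi}D_0' = D_0'$ and $\overline{D} = \overline{\phi}\mathcal{L} \perp \overline{\phi}\mathcal{S}$ is precisely the definition of a GCR-lightlike submanifold in the sense of \cite{ds3}, completing the proof. The main obstacle is the first step, as it is the only place where the nearly Sasakian ambient structure must be shown not to interfere with Calin's original Sasakian argument.
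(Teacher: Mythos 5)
Your proposal is correct and follows essentially the same route as the paper: establish $\xi\in\Gamma(S(TM))$ via Calin's argument, use $\eta|_{\mathcal{S}}=\eta|_{\mathcal{L}}=0$ to turn the direct sum in (\ref{s81}) into $\overline{\phi}\,\mathcal{S}\perp\overline{\phi}\,\mathcal{L}$, and then split $D_0=D_0'\perp\langle\xi\rangle$ with $\overline{\phi}D_0'=D_0'$ to recover the GCR decomposition. The only difference is that you spell out some justifications (why $\xi_S\in D_0$, why $D_0'$ is $\overline{\phi}$-invariant) that the paper leaves implicit.
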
  
 Next, we follow Yano-Kon \cite[p. 353]{yk} definition of contact CR-submanifolds and state the following definition for a quasi contact CR-lightlike submanifold.
\begin{definition}\label{def3}{\rm
Let $(M,g,S(TM),S(TM^\perp))$  be a lightlike submanifold of an indefinite nearly Sasakian manifold $(\overline{M}, \overline{g}, \overline{\phi},\xi,\eta)$. We say that $M$ is quasi contact CR-lightlike submanifold of $\overline{M}$ if the following conditions are satisfied;
\begin{enumerate}
 \item [(i)] $\mathrm{Rad}\,TM$ is a distribution on M such that $\mathrm{Rad}\,TM\cap\overline{\phi}(\mathrm{Rad}\,TM)=\{0\}$; 
 \item [(ii)] there exist vector bundles $D_0$ and $D'$ over $S(TM)$ such that 
         \begin{align} 
              & S(TM)=\{\overline{\phi}(\mathrm{Rad}\,TM)\oplus D'\}\perp D_0,\\ 
              \mbox{with}\;\;\; &\overline{\phi}D_{0} \subseteq D_{0},\;\; D'=\overline{\phi} L_{1} \oplus \overline{\phi}l\mathrm{tr} (T M), \label{s888}
         \end{align}    
\end{enumerate}
where $D_0$ is a non-degenerate, $L_{1}$ is vector subbundle of $S(TM^{\perp})$.
}
\end{definition}
It is easy to see that when the structure vector field $\xi$ is tangent to the quasi Contact CR-lightlike submanifold $M$, then $M$ is a contact CR.
\begin{proposition}
 A QGCR-lightlike submanifold of an indefinite nearly Sasakian manifold $\overline{M}$, is a quasi contact CR if and only if $D_{1} = \{0\}$.
\end{proposition}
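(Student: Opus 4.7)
The plan is to match the two structural definitions piece by piece, using the fact that both Definition \ref{def2} and Definition \ref{def3} decompose $T\overline{M}$ via the same ambient splitting (\ref{eq08}).

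For the forward direction (``quasi contact CR $\Rightarrow D_1 = \{0\}$''), I would lean on Definition \ref{def3}(i), which asserts $\mathrm{Rad}\,TM \cap \overline{\phi}(\mathrm{Rad}\,TM) = \{0\}$. By (\ref{eq03}), any $X \in D_1$ sits inside $\mathrm{Rad}\,TM$ and, via the invariance $\overline{\phi}D_1 = D_1$, is also of the form $\overline{\phi}Y$ with $Y \in D_1 \subset \mathrm{Rad}\,TM$. Therefore $D_1 \subset \mathrm{Rad}\,TM \cap \overline{\phi}(\mathrm{Rad}\,TM) = \{0\}$, and $D_1$ must vanish.

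For the converse, assume $D_1 = \{0\}$, so $\mathrm{Rad}\,TM = D_2$ by (\ref{eq03}). Then $\overline{\phi}(\mathrm{Rad}\,TM) = \overline{\phi}D_2 \subset S(TM)$, and combining with (\ref{eq05}) gives $\mathrm{Rad}\,TM \cap \overline{\phi}(\mathrm{Rad}\,TM) \subset \mathrm{Rad}\,TM \cap S(TM) = \{0\}$, yielding Definition \ref{def3}(i). The QGCR splitting (\ref{s81}) now reads
\[
S(TM) = \{\overline{\phi}(\mathrm{Rad}\,TM) \oplus (\overline{\phi}\mathcal{S} \oplus \overline{\phi}\mathcal{L})\} \perp D_0,
\]
and the $\overline{\phi}$-invariance of $D_0$ matches Definition \ref{def3}(ii) on that summand. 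Setting $L_1 := \mathcal{S} \subset S(TM^\perp)$ handles the screen-transversal piece of $D'$; what remains is to verify $\overline{\phi}\mathcal{L} = \overline{\phi}(l\mathrm{tr}(TM))$, i.e.\ that $\mathcal{L}$ exhausts all of $l\mathrm{tr}(TM)$.

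The main obstacle is precisely this last identification. The plan is a duality argument combining the pairing $g(E_i, N_j) = \delta_{ij}$ with the skew-symmetry relation $g(\overline{\phi}X, Y) = -g(X, \overline{\phi}Y)$, which one derives from (\ref{equa2}) by substituting $Y \mapsto \overline{\phi}Y$ and using $\eta \circ \overline{\phi} = 0$ from (\ref{equa1}). For any $N \in l\mathrm{tr}(TM)$ and $E \in \mathrm{Rad}\,TM = D_2$,
\[
g(\overline{\phi}N, E) = -g(N, \overline{\phi}E) = 0,
\]
since $\overline{\phi}E \in \overline{\phi}D_2 \subset S(TM)$, and $S(TM) \perp l\mathrm{tr}(TM)$ in the ambient decomposition (\ref{eq08}). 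Because the $E_i$'s are dual to the $N_j$'s, this kills the $l\mathrm{tr}(TM)$-component of $\overline{\phi}N$. A parallel orthogonality test against $S(TM^\perp)$ (using the skew-symmetry of $\overline{\phi}$ and $\overline{\phi}(S(TM^\perp)) = \overline{\phi}\mathcal{S} \oplus (\text{what is orthogonal to } l\mathrm{tr}(TM))$), and then expanding $\overline{\phi}^2 N = -N + \eta(N)\xi$ against (\ref{eq2})--(\ref{eq0}) to rule out the $\mathrm{Rad}\,TM = D_2$-component, leaves $\overline{\phi}N \in S(TM)$. Hence $\overline{\phi}(l\mathrm{tr}(TM)) \subset S(TM)$, one may take $\mathcal{L} = l\mathrm{tr}(TM)$, and the QGCR datum becomes a quasi contact CR datum, completing the equivalence.
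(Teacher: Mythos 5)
Your forward direction is correct and is exactly the paper's argument: by Definition \ref{def3}(i) and the $\overline{\phi}$-invariance of $D_1$ one gets $D_1\subset \mathrm{Rad}\,TM\cap\overline{\phi}(\mathrm{Rad}\,TM)=\{0\}$. The converse, which the paper dismisses as ``obvious,'' is where your argument has a genuine gap. You correctly isolate the crux --- one must show $\overline{\phi}\mathcal{L}=\overline{\phi}(l\mathrm{tr}(TM))$, i.e.\ that $\mathcal{L}$ exhausts $l\mathrm{tr}(TM)$ --- and your first orthogonality test is sound: $g(\overline{\phi}N,E)=-g(N,\overline{\phi}E)=0$ for all $E\in\mathrm{Rad}\,TM=D_2$, since $\overline{\phi}E\in S(TM)\perp l\mathrm{tr}(TM)$, so $\overline{\phi}N$ has no $l\mathrm{tr}(TM)$-component. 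But the remaining two eliminations are not justified. Detecting the $S(TM^\perp)$-component of $\overline{\phi}N$ requires $g(\overline{\phi}N,W)=-g(N,\overline{\phi}W)$ for \emph{all} $W\in S(TM^\perp)$, and Definition \ref{def2} says nothing about $\overline{\phi}$ on the complement of $\mathcal{S}$ in $S(TM^\perp)$; likewise the $\mathrm{Rad}\,TM$-component is detected by $g(\overline{\phi}N,N')=-g(N,\overline{\phi}N')$, which refers back to $\overline{\phi}$ on $l\mathrm{tr}(TM)$ --- precisely what you are trying to determine. These tests are circular, and the proposed expansion of $\overline{\phi}^2N=-N+\eta(N)\xi$ is not carried out and does not obviously isolate the offending component, because $\xi$ itself has components in all four summands of (\ref{eq08}).

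The gap is fixable, and in two ways that stay inside what the paper provides. Either use the dimension count the paper records after Definition \ref{def2} ($\dim D_2=\dim\mathcal{L}$): with $D_1=\{0\}$ this gives $\dim\mathcal{L}=\dim D_2=r=\dim l\mathrm{tr}(TM)$, hence $\mathcal{L}=l\mathrm{tr}(TM)$ and $\overline{\phi}(l\mathrm{tr}(TM))=\overline{\phi}\mathcal{L}\subset S(TM)$ by the QGCR structure itself. Or use the splitting $l\mathrm{tr}(TM)=\mathcal{L}\perp\nu$ with $\overline{\phi}\nu=\nu$ established just before Lemma \ref{Lemma1}: for $N\in\Gamma(\nu)$ your own computation forces $\overline{\phi}N\in\Gamma(\nu)$ to have vanishing $l\mathrm{tr}(TM)$-component, so $\overline{\phi}N=0$, whence $N=\eta(N)\xi$ by (\ref{equa1}); since $N$ is null while $\overline{g}(\eta(N)\xi,\eta(N)\xi)=\eta(N)^2$, we get $N=0$ and $\nu=\{0\}$. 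Either route closes the converse; as written, your proof does not.
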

\begin{proof}
 Let $M$ be a quasi contact CR-lightlike submanifold. Then $\overline{\phi}(\mathrm{Rad}\,TM)$ is a distribution on $M$ such that $ \overline{\phi}(\mathrm{Rad}\,TM)\cap\mathrm{Rad}\,TM=\{0\}$. Therefore, $D_{2} = \mathrm{Rad}\,TM$ and $D_{1}=\{0\}$. Hence, $ \overline{\phi}(l\mathrm{tr} (T M)) \cap l\mathrm{tr} (T M)=\{0\}$. Then it follows that $\overline{\phi}(l\mathrm{tr} (T M))\subset S(T M)$. The converse is obvious.
\end{proof}
From (\ref{eq05}), the tangent bundle of any QGCR lightlike submanifold, $TM$, can be rewritten as
   \begin{equation*}
       TM=D \oplus \widehat{D},\;\;\mbox{where}\;D=D_0\perp D_{1}\;\mbox{and}\;\widehat{D}=\{D_{2}\perp\overline{\phi} D_{2}\} \oplus\overline{D}.
   \end{equation*}
   
Notice that $D$ is invariant with respect to $\overline{\phi}$ while $\widehat{D}$ is not generally anti-invariant with respect to $\overline{\phi}$.

Note the following for a proper QGCR-lightlike submanifold $(M,g,S(TM)$, $S(TM^\perp))$ of an indefinite almost contact metric manifolds $\overline{M}$ according to Definition \ref{def2}:
          \begin{enumerate}
               \item Condition (i) implies that $\dim(\mathrm{Rad}\, TM)=s\ge  3$,
               \item Condition (ii) implies that $\dim(D)\ge  4l\ge 4$ and $\dim(D_2)= \dim(\mathcal{L})$. 
          \end{enumerate}       
Next we adopt the definition of ascreen lightlike submanifolds used by Jin in \cite{ji} for the case of contact ambient manifold in case of lightlike submanifolds of an almost contact manifold.
\begin{definition}\label{def3} \cite{ji} {\rm
A lightlike submanifold $M$, immersed in a semi-Riemannian manifold $\overline{M}$ is said to be ascreen  if the structural vector field, $\xi$, belongs to $\mathrm{Rad}\, TM$ $\oplus l\mathrm{tr}(T M)$.
}
\end{definition}
Note that, since $\mathcal{L}$ defined in Definition \ref{def2} is a subbundle of $l\mathrm{tr}(T M)$, there is a complementary subbundle $\nu$ of $l\mathrm{tr}(T M)$ such that 
$$
l\mathrm{tr}(T M) = \mathcal{L}\perp\nu.
$$
It is easy to check that the complementary subbundle $\nu$ is invariant under $\overline{\phi}$, i.e. $\overline{\phi}\nu =\nu$.

Let $M$ be an ascreen QGCR-lightlike submanifold of an indefinite nearly Sasakian manifold $\overline{M}$. Then by Definition \ref{def3}, the structural vector field $\xi\in\mathrm{Rad}\, TM$ $\oplus l\mathrm{tr}(T M)$. This means that $\xi$ is either in $\mathrm{Rad}\,TM$ or $l\mathrm{tr}(T M)$. If $\xi\in \mathrm{Rad}\, T M$, then $\xi \in D_{2}$ since $\overline{\phi}D_{1} = D_{1}$ and $\overline{\phi}\xi=0$. On the other hand, if $\xi\in l\mathrm{tr}(T M)$, then  $\xi\in\Gamma(\mathcal{L})$ because of the fact that $\overline{\phi}\nu =\nu$ and $\overline{\phi}\xi=0$. Therefore, we have the following.
\begin{lemma}\label{Lemma1}
If $M$ is an ascreen QGCR-lightlike submanifold of an indefinite nearly Sasakian manifold $\overline{M}$, then $\xi\in\Gamma(D_{2}\oplus\mathcal{L})$.
\end{lemma}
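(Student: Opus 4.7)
The plan is to use the ascreen hypothesis to decompose $\xi$ along the finest splitting of $\mathrm{Rad}\,TM\oplus l\mathrm{tr}(TM)$ coming from the QGCR structure, and then extract the vanishing of the unwanted components from the single identity $\overline{\phi}\,\xi=0$ in (\ref{equa1}).

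First I would fix the decompositions $\mathrm{Rad}\,TM=D_{1}\oplus D_{2}$ from Definition \ref{def2} and $l\mathrm{tr}(TM)=\mathcal{L}\perp\nu$ recalled above, and write
\[
\xi=\xi_{1}+\xi_{2}+\xi_{\mathcal{L}}+\xi_{\nu},
\]
with $\xi_{1}\in\Gamma(D_{1})$, $\xi_{2}\in\Gamma(D_{2})$, $\xi_{\mathcal{L}}\in\Gamma(\mathcal{L})$, $\xi_{\nu}\in\Gamma(\nu)$. The target is $\xi_{1}=0$ and $\xi_{\nu}=0$. Applying $\overline{\phi}$ and using $\overline{\phi}\,\xi=0$, the four images land in mutually disjoint summands of (\ref{eq08}): $\overline{\phi}\xi_{1}\in D_{1}\subset\mathrm{Rad}\,TM$, $\overline{\phi}\xi_{2}\in\overline{\phi}D_{2}\subset S(TM)$, $\overline{\phi}\xi_{\mathcal{L}}\in\overline{\phi}\mathcal{L}\subset\overline{D}\subset S(TM)$, and $\overline{\phi}\xi_{\nu}\in\nu\subset l\mathrm{tr}(TM)$. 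Matching the $\mathrm{Rad}\,TM$- and $l\mathrm{tr}(TM)$-components then yields $\overline{\phi}\xi_{1}=0$ and $\overline{\phi}\xi_{\nu}=0$.

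The final step is to deduce $\xi_{1}=0$ and $\xi_{\nu}=0$ from these two kernel equations. Here I would invoke that $\overline{\phi}$ restricts to a surjective endomorphism on each of the finite-rank bundles $D_{1}$ and $\nu$ (by $\overline{\phi}D_{1}=D_{1}$ and $\overline{\phi}\nu=\nu$), hence fibrewise to an injective one by rank-nullity, making its restricted kernel trivial. I expect this pointwise bijectivity check to be the only subtlety: the alternative route via $\overline{\phi}^{2}X=-X+\eta(X)\,\xi$ would require a separate argument to rule out $\eta(\xi_{1})\neq 0$, which would force $\xi$ to lie inside $D_{1}$ and collide with the other components in the unique decomposition. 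Once $\xi_{1}=\xi_{\nu}=0$ is established, $\xi=\xi_{2}+\xi_{\mathcal{L}}\in\Gamma(D_{2}\oplus\mathcal{L})$, which is the desired conclusion.
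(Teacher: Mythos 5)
Your proof is correct and follows essentially the same route as the paper: both use $\overline{\phi}\,\xi=0$ together with the invariance $\overline{\phi}D_{1}=D_{1}$ and $\overline{\phi}\nu=\nu$ to kill the $D_{1}$- and $\nu$-components of $\xi$. If anything, your version is the more careful one --- the paper's proof asserts that $\xi$ lies entirely in $\mathrm{Rad}\,TM$ or entirely in $l\mathrm{tr}(TM)$ (which is not the generic situation, since by Theorem \ref{asc} the ascreen $\xi$ has nonzero components in both $D_{2}$ and $\mathcal{L}$), whereas you decompose $\xi$ into all four summands and make explicit the fibrewise injectivity of $\overline{\phi}$ on the finite-rank invariant subbundles.
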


\begin{theorem}\label{asc}
 Let $(M,g,S(TM),S(TM^\perp))$ be a 3-lightlike QGCR submanifold of an indefinite almost contact manifold $(\overline{M}, \overline{g}, \overline{\phi},\xi,\eta)$. Then $M$ is ascreen lightlike submanifold if and only if $\overline{\phi}\mathcal{L}=\overline{\phi} D_{2}$.
\end{theorem}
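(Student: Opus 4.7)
The plan is to use Lemma \ref{Lemma1} together with the identity $\overline{\phi}^{2}=-\mathbb{I}+\eta\otimes\xi$ from (\ref{equa1}) to convert the ascreen condition into a linear dependence between the images of $D_{2}$ and $\mathcal{L}$ under $\overline{\phi}$.

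I would first pin down the relevant dimensions. Since $r=3$ and $\overline{\phi}D_{1}=D_{1}$ with $D_{1}\subseteq \mathrm{Rad}\,TM$, the fact that $\overline{g}(\xi,\xi)=\eta(\xi)=1$ forces $\xi\notin \mathrm{Rad}\,TM$, which in turn forces $\eta|_{D_{1}}\equiv 0$; hence $\overline{\phi}^{2}|_{D_{1}}=-\mathbb{I}$ and $\dim D_{1}$ is even. In the proper setting (the case that makes the theorem non-vacuous) this yields $\dim D_{1}=2$ and $\dim D_{2}=1$, and together with the relation $\dim D_{2}=\dim \mathcal{L}$ noted after Definition \ref{def2}, also $\dim \mathcal{L}=1$. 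Choose local generators $E$ of $D_{2}$ and $N$ of $\mathcal{L}$. Both $\overline{\phi}E$ and $\overline{\phi}N$ are nonzero: if, say, $\overline{\phi}E=0$, then $\overline{\phi}^{2}E=0$ gives $E=\eta(E)\xi$, which is impossible since $E$ is null while $\xi$ is not.

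For the direction ($\Rightarrow$), assume $M$ is ascreen. Lemma \ref{Lemma1} writes $\xi=aE+bN$ for smooth $a,b$, and by (\ref{eq0}) we have $a=\eta(N)$, $b=\eta(E)$; substituting into $\eta(\xi)=1$ yields $2ab=1$, so $a,b\neq 0$. Applying $\overline{\phi}$ to $\xi=aE+bN$ and using $\overline{\phi}\xi=0$ produces $a\overline{\phi}E+b\overline{\phi}N=0$; since $a,b$ are nonzero scalars and each of $\overline{\phi}D_{2}$, $\overline{\phi}\mathcal{L}$ is one-dimensional, this gives $\overline{\phi}D_{2}=\overline{\phi}\mathcal{L}$.

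For the direction ($\Leftarrow$), assume $\overline{\phi}D_{2}=\overline{\phi}\mathcal{L}$. Then there exists a nonzero smooth function $c$ with $\overline{\phi}E=c\,\overline{\phi}N$, equivalently $\overline{\phi}(E-cN)=0$. Applying $\overline{\phi}$ once more and using (\ref{equa1}) produces $E-cN=\eta(E-cN)\xi$. The coefficient $\eta(E-cN)$ cannot vanish, for otherwise $E=cN$, contradicting the linear independence of $E\in \mathrm{Rad}\,TM$ and $N\in l\mathrm{tr}(TM)$. Therefore $\xi$ is a nonzero multiple of $E-cN\in D_{2}\oplus\mathcal{L}\subset\mathrm{Rad}\,TM\oplus l\mathrm{tr}(TM)$, so $M$ is ascreen. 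The main obstacle is the bookkeeping required to certify that each coefficient involved is nonzero; once those nondegeneracy checks are in hand, the rest is straightforward linear algebra.
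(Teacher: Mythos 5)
Your proof is correct, and your forward implication coincides with the paper's: both invoke Lemma \ref{Lemma1} to write $\xi=aE+bN$, apply $\overline{\phi}$, and use $\overline{\phi}\xi=0$ together with the rank-one count for $D_{2}$ and $\mathcal{L}$ to identify the two line bundles. The converse, however, is genuinely different from and cleaner than the paper's. The paper takes the $\overline{g}$-product of $\overline{\phi}E=\omega\overline{\phi}N$ with $\overline{\phi}E$ and $\overline{\phi}N$ to obtain $b^{2}=\omega(ab-1)$ and $\omega a^{2}=ab-1$, runs a sign analysis ($b=\pm\omega a$, with $b=\omega a$ excluded by contradiction) to recover $2ab=1$, and only then reassembles $\xi=aE+bN$ by applying $\overline{\phi}$ to $a\overline{\phi}E+b\overline{\phi}N=0$. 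You instead apply $\overline{\phi}$ once to $\overline{\phi}(E-cN)=0$ and read off $E-cN=\eta(E-cN)\xi$ directly from $\overline{\phi}^{2}=-\mathbb{I}+\eta\otimes\xi$, with the nonvanishing of $\eta(E-cN)$ guaranteed by the directness of $\mathrm{Rad}\,TM\oplus l\mathrm{tr}(TM)$; this avoids the metric computations and the case split entirely, at the cost of not producing the explicit normalization $2ab=1$ that the paper reuses in Example \ref{exa1}. Your auxiliary checks --- that $\overline{\phi}E\neq0$ and $\overline{\phi}N\neq0$ via the nullity of $E$ and $N$, and the evenness of $\dim D_{1}$ --- are sound; the only loose point is the claim that $\xi\notin\mathrm{Rad}\,TM$ ``forces'' $\eta|_{D_{1}}\equiv0$, which is stated too tersely (the clean reason is that $\overline{\phi}D_{1}=D_{1}$ means every section of $D_{1}$ is $\overline{\phi}$ of a section of $D_{1}$, and $\eta\circ\overline{\phi}=0$), but this affects only a side remark on dimensions that the paper likewise asserts without proof.
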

 \begin{proof} 
 Suppose that $M$ is ascreen. Then, by Lemma \ref{Lemma1}, $\xi\in\Gamma(D_{2}\oplus\mathcal{L})$. Since $M$ is a 3-lightlike QGCR submanifold, and $\mathrm{Rad}\, T M= D_{1}\oplus D_{2}$ with $\overline{\phi}D_{1}= D_{1}$ and $l\mathrm{tr}(T M) = \mathcal{L}\perp\nu$ with $\overline{\phi}\nu=\nu$, the distributions $D_{2}$ and $\mathcal{L}$ are of rank 1. Consequently,  
  \begin{equation}\label{s80}
   \xi=a E+ b N, 
  \end{equation}
  where $E\in\Gamma(D_2)$ and $N\in\Gamma(\mathcal{L})$, and $a=\eta(N)$ and $b=\eta(E)$ are non-zero smooth functions. Applying $\overline{\phi}$ to the first relation of (\ref{s80}) and using the fact that $\overline{\phi}\xi=0$, we get
\begin{equation}\label{s82}
   a \overline{\phi}E+b \overline{\phi}N=0.
  \end{equation}
From (\ref{s82}), one gets $\overline{\phi}E=\omega\overline{\phi}N$, where $\omega=-\frac{b}{a}\neq0$, a non vanishing smooth function. This implies that $\overline{\phi}\mathcal{L}\cap\overline{\phi} D_2\neq\{0\}$.   Since $rank(\overline{\phi} D_2)=rank(\overline{\phi}\mathcal{L})=1$, it follows that $\overline{\phi}\mathcal{L}=\overline{\phi} D_2$.

Conversely, suppose that $\overline{\phi}\mathcal{L}=\overline{\phi} D_2$. Then, there exists a non-vanishing smooth function $\omega$ such that 
\begin{equation}\label{s84}
 \overline{\phi}E=\omega\overline{\phi}N. 
\end{equation}
Taking the $ \overline{g}$--product of (\ref{s84}) with respect to $\overline{\phi}E$ and $\overline{\phi}N$ in turn, we get
\begin{equation}\label{s85}
 b^2=\omega(ab-1)\;\;\mbox{and}\;\;\omega a^2=a b-1.
\end{equation}
Since $\omega\neq 0$, by (\ref{s85}), we have $a\neq0$, $b\neq0$ and $b^2=(\omega a)^2$. The latter gives $b= \pm \omega a$. The case $b=\omega a $ implies that $a b =\omega a ^2=a b -1$, which is a contradiction. Thus $b =-\omega a $, from which $2a b=1$. Since $\omega =-\frac{b}{a}$, $a\neq0$ and $\overline{\phi}E=\omega\overline{\phi}N$, it is easy to see that $a \overline{\phi}E+b \overline{\phi}N=0$. Applying $\overline{\phi}$ to this equation, and using the first relation in (\ref{equa1}), together with $2ab=1$, we get 
$\xi=aE+bN$. Therefore, $M$ is ascreen lightlike submanifold of $\overline{M}$.
 \end{proof} 
In the ascreen QGCR-lightlike submanifold case, the item (ii) of Definition \ref{def2} implies that $\dim(D)\ge 4l\ge 4$ and $\dim(D_2)=\dim(\mathcal{L})$. Thus $\dim(M)\ge 7$ and $\dim(\overline{M})\ge 11$, and any 7-dimensional ascreen QGCR-lightlike submanifold is 3-lightlike.

As an example for QGCR-lightlike submanifold of indefinite nearly Sasakian manifold, we have the following. 
\begin{example}\label{exa1}
{\rm
Let $\overline{M}=(\mathbb{R}_4^{11}, \overline{g})$ be a semi-Euclidean space,  where $\overline{g}$ is of signature $(-,-,+,+,+, - ,  -,+,+,+,+)$ with respect to the canonical basis
\begin{equation*}
 (\partial x_1,\partial x_2,\partial x_3,\partial x_4,\partial x_5,\partial y_1,\partial y_2,\partial y_3,\partial y_4,\partial y_{5},\partial z).
\end{equation*}
Let $(M,g)$ be a submanifold of $\overline{M}$ given by 
\begin{equation*}
 x^1=y^4,\;\; y^1=-x^4,\;\; z= x^2\sin\theta + y^2\cos\theta \;\;\mbox{and}\;\; y^5=(x^5)^{\frac{1}{2}},
 \end{equation*}
 where $\theta\in(0,\frac{\pi}{2})$. By direct calculations, we can easily check that the vector fields
\begin{align*} 
 E_1 & =\partial x_4+\partial y_1+y^4\partial z,\;\;\; E_2=\partial x_1-\partial y_4+y^1\partial z,\\
 E_3 & =\sin\theta \partial x_2 +\cos\theta \partial y_2+\partial z, \; \;\; X_1=2y^5\partial x_5+\partial y_5+2(y^5)^2\partial z,\\ 
 X_2 & =-\cos\theta \partial x_2 +\sin\theta \partial y_2-y^2\cos\theta\partial z,\;\;\;  X_3=2\partial y_3,\; X_4=2(\partial x_3+y^3\partial z), 
\end{align*}
form a local frame of $TM$. From this, we can see that $\mathrm{Rad} \, TM$ is spanned by $\{E_1, E_2, E_3\}$, and therefore, $M$ is 3-lightlike. Further, $\overline{\phi}_0 E_1=E_2$, therefore we set $D_1=\mbox{Span}\{E_1,E_2\}$. Also $\overline{\phi}_0 E_3=-X_2$ and thus $D_2=\mathrm{Span}\{E_3\}$. It is easy to see that $\overline{\phi}_0 X_3=X_4$, so we set $D_0=\mathrm{Span}\{X_3,X_4\}$. On the other hand, following direct calculations, we have 
\begin{align} 
 N_1 & =\frac{1}{2}(\partial x_4-\partial y_1+y^4\partial z),\;\; \;  N_2=\frac{1}{2}(-\partial x_1-\partial y_4+y^1\partial z),\nonumber\\
 N_3 & =\frac{1}{2}(-\sin\theta \partial x_2 -\cos\theta \partial y_2+\partial z),\;\; \;  W=\partial x_5-2y^5\partial y_5+y^5\partial z, \nonumber
\end{align}
from which $l\mathrm{tr}(TM)=\mathrm{Span}\{N_1,N_2,N_3\}$ and $S(TM^\perp)=\mathrm{Span}\{W\}$. Clearly, $\overline{\phi}_0 N_2=-N_1$. Further, $\overline{\phi}_0 N_3=\frac{1}{2} X_2$ and thus $\mathcal{L}=\mbox{Span}\{N_3\}$. Notice that $\overline{\phi}_0N_3=-\frac{1}{2}\overline{\phi}_0 E_3$ and therefore $\overline{\phi}_0\mathcal{L}=\overline{\phi}_0 D_2$. Also, $\overline{\phi}_0 W=-X_1$ and therefore $\mathcal{S}=\mbox{Span}\{W\}$. Finally, we calculate $\xi$ as follows; Using Theorem \ref{asc} we have $\xi=a E_3+b N_3$. Applying $\overline{\phi}_0$ to this equation we obtain $a\overline{\phi}_0 E_3+b\overline{\phi}_0 N_3=0$. Now, substituting for $\overline{\phi}_0 E_3$ and  $\overline{\phi}_0 N_3$ in this equation we get $2a=b$, from which we get $\xi=\frac{1}{2}(E_3+2N_3)$. Since $\overline{\phi}_0\xi=0$ and $\overline{g}(\xi,\xi)=1$, we conclude that $(M,g)$ is an ascreen QGCR-lightlike submanifold of $\overline{M}$. }
\end{example}
\begin{proposition}
There exist no co-isotropic, isotropic or totally lightlike proper QGCR-lightlike submanifolds of an indefinite nearly Sasakian manifold.
\end{proposition}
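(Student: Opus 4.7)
The plan is to argue directly from the definition of a proper QGCR-lightlike submanifold, which imposes that $D_{1}$, $D_{0}$, $D_{2}$, and $\mathcal{S}$ are all non-zero. Each of the three excluded classes forces one of these bundles to vanish, contradicting properness. So the proof should be little more than carefully matching the definitions against the conditions $S(TM)=\{0\}$ and $S(TM^{\perp})=\{0\}$.

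First I would handle the co-isotropic case. Since a co-isotropic submanifold has $S(TM^{\perp}) = \{0\}$ by definition, and since by Definition \ref{def2} the bundle $\mathcal{S}$ is a vector subbundle of $S(TM^{\perp})$, we immediately get $\mathcal{S}=\{0\}$, contradicting the properness condition $\mathcal{S}\neq\{0\}$.

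Next I would treat the isotropic case. Here $S(TM)=\{0\}$, but properness requires $D_{0}\neq\{0\}$, and $D_{0}$ is a non-degenerate distribution contained in $S(TM)$ by the splitting $S(TM)=\{\overline{\phi}D_{2}\oplus\overline{D}\}\perp D_{0}$ in Definition \ref{def2}. This is a direct contradiction. (As a bonus, $S(TM)=\{0\}$ also forces $\overline{\phi}D_{2}=\{0\}$, and since $\overline{\phi}$ restricted to $D_{2}$ is injective --- indeed $D_{2}\cap\ker\overline{\phi}\subseteq D_{2}\cap\langle\xi\rangle=\{0\}$ because $\xi\notin\mathrm{Rad}\,TM$ generically, or one simply uses that $\overline{\phi}D_{2}\subset S(TM)$ is part of the definition --- one likewise gets $D_{2}=\{0\}$, giving a second contradiction.)

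Finally the totally lightlike case simultaneously has $S(TM)=\{0\}$ and $S(TM^{\perp})=\{0\}$, so both obstructions above apply. In each of the three cases the conclusion of properness fails, and hence no proper QGCR-lightlike submanifold of an indefinite nearly Sasakian manifold can be co-isotropic, isotropic, or totally lightlike. There is no real obstacle here; the only thing to be careful about is to quote the correct containments ($\mathcal{S}\subseteq S(TM^{\perp})$ and $D_{0}\subseteq S(TM)$) from Definition \ref{def2} before invoking the vanishing of the screen distributions.
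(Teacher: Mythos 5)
Your argument is correct and is precisely the definitional check the paper intends: co-isotropic forces $\mathcal{S}\subseteq S(TM^{\perp})=\{0\}$, isotropic forces $D_{0}\subseteq S(TM)=\{0\}$, and totally lightlike forces both, each contradicting properness. The paper omits the proof entirely (evidently regarding it as immediate), so your write-up simply supplies the intended argument; the parenthetical aside about injectivity of $\overline{\phi}$ on $D_{2}$ is unnecessary and slightly shaky (since $\xi$ may have a component in $D_{2}$), but the main line does not depend on it.
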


\section{Some characterization theorems}\label{existence}

 In this section, we discuss an existence and some non-existence theorems for proper QGCR-lightlike submanifolds of an indefinite nearly Sasakian manifold $(\overline{M},\overline{\phi},\eta,\xi,\overline{g})$.
\begin{theorem}\label{Thm1}
   There exist no totally umbilical  proper QGCR-lightlike submanifolds $(M,g,S(TM)$, $S(TM^\perp))$ of an indefinite nearly Sasakian manifold $(\overline{M},\overline{\phi}$ $,\eta,\xi,\overline{g})$ with the structure vector field $\xi$ tangent to $M$.
\end{theorem}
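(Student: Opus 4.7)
The plan is to exploit the tangency hypothesis to force $M$ to be totally geodesic, and then to derive a contradiction from the nontriviality of the screen transversal component $\mathcal{S}$ required by the proper-QGCR definition. Specifically, I expect to compute $\overline{g}(\overline{\nabla}_X W,\xi)$ in two incompatible ways for a carefully chosen non-null $W\in\Gamma(\mathcal{S})$.

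I would first observe that, since $\xi$ is tangent to $M$, the discussion following Definition~\ref{def2} places $\xi\in\Gamma(D_0)\subset\Gamma(S(TM))$, so in particular $g(\xi,\xi)=\eta(\xi)=1$. Combining the Gauss formula (\ref{eq11}) at $(\xi,\xi)$ with (\ref{p}) and matching tangent and transversal components yields $h(\xi,\xi)=0$; the totally umbilical hypothesis (\ref{eq17}) then forces the transversal curvature vector $\mathscr{H}$ to vanish. Hence $h\equiv 0$ on $M$, and in particular $h^s\equiv 0$.

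Because $M$ is proper, $\mathcal{S}\neq\{0\}$, so I can pick a local section $W\in\Gamma(\mathcal{S})$ with $\overline{g}(W,W)\neq 0$. Set $X=\overline{\phi}W\in\Gamma(\overline{\phi}\mathcal{S})\subset\Gamma(S(TM))$; then $\overline{\phi}X=-W$ (since $\eta(W)=0$), $\eta(X)=0$, and by (\ref{equa2}) $g(X,X)=\overline{g}(W,W)\neq 0$. Applying the nearly Sasakian identity (\ref{eqz}) with $\overline{X}=\overline{Y}=X$ gives $(\overline{\nabla}_X\overline{\phi})X=g(X,X)\xi$, which, using $\overline{\nabla}_X X=\nabla_X X$ (from $h\equiv 0$) and $\overline{\phi}X=-W$, expands to
\[
-\overline{\nabla}_X W-\overline{\phi}\nabla_X X=g(X,X)\xi.
\]
Taking $\overline{g}(\,\cdot\,,\xi)$ of both sides and using $\overline{\phi}\xi=0$ kills the second term on the left, leaving $-\overline{g}(\overline{\nabla}_X W,\xi)=g(X,X)$.

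To close, I would compute $\overline{g}(\overline{\nabla}_X W,\xi)$ independently. The Weingarten formula (\ref{eq32}), combined with the orthogonalities $\overline{g}(\xi,N_i)=0=\overline{g}(\xi,W_\beta)$ coming from the decomposition (\ref{eq08}), gives $\overline{g}(\overline{\nabla}_X W,\xi)=-g(A_W X,\xi)$; and (\ref{eq07}) together with $h^s\equiv 0$ and $\lambda_i(\xi)=\overline{g}(\xi,N_i)=0$ gives $g(A_W X,\xi)=0$. Hence $\overline{g}(\overline{\nabla}_X W,\xi)=0$, so the previous identity forces $g(X,X)=\overline{g}(W,W)=0$, contradicting the choice of $W$. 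The main point requiring care is that evaluating $\overline{g}(\overline{\nabla}_X W,\xi)$ directly via (\ref{v10}) makes the whole chain tautological, because the $\overline{H}$-contribution collapses by the skew-symmetries in Lemma~2.1 together with the skew-adjointness of $\overline{\phi}$; the genuine contradiction surfaces only through the shape operator route, where the totally geodesic conclusion bites via (\ref{eq07}).
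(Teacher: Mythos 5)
Your proof is correct and follows essentially the same route as the paper's: both derive the contradiction $\overline{g}(W,W)=0$ by evaluating the $\mathcal{S}$-component of $\overline{\nabla}_{\overline{\phi}W}\,\xi$ (equivalently the quantity $h^{s}(\overline{\phi}W,\xi)$ paired with $W$) once from the nearly Sasakian structure and once from the umbilicity hypothesis. Your two refinements --- extracting $\mathcal{H}=0$ immediately from $h(\xi,\xi)=0$ and $g(\xi,\xi)=1$, and absorbing the cancellation $\overline{g}(\overline{H}\,\overline{\phi}W,W)=0$ into the symmetrized identity $(\overline{\nabla}_X\overline{\phi})X=g(X,X)\xi$ instead of verifying it separately as the paper does --- streamline the argument but do not change its substance.
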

 \begin{proof}
  Suppose that $\xi\in\Gamma(TM)$ and that $M$ is totally umbilical in $\overline{M}$. Then, $\xi=\xi_R+\xi_S$ and $b_i=c_\alpha=0$. Using (\ref{v10}) and (\ref{eq11}), we get
         \begin{equation}\label{eq15} 
  -\overline{\phi} X=\overline{H}X+\nabla_X \xi+\sum_{i=1}^rh_i^l(X,\xi)N_i+\sum_{\alpha=r+1}^nh_\alpha^s(X,\xi)W_\alpha,
          \end{equation}        
 for all $X\in \Gamma(TM)$. Taking the $\overline{g}$--product of (\ref{eq15}) with respect to $W_\alpha\in \Gamma(\mathcal{S})$  we  get
  \begin{equation}\label{eqw}
   g(X,\overline{\phi} W_\alpha)=\overline{g}(\overline{H}X, W_\alpha)+ \epsilon_\alpha h_\alpha^s(X,\xi),\;\;\forall\,X\in \Gamma(TM).
  \end{equation}
Now, letting  $X=\overline{\phi} W_\alpha$ in (\ref{eqw}) we obtain
  \begin{equation}\label{v2}
  g(\overline{\phi} W_\alpha,\overline{\phi} W_\alpha)=\overline{g}(\overline{H}\,\overline{\phi} W_\alpha, W_\alpha)+ \epsilon_\alpha h_\alpha^s(\overline{\phi} W_\alpha,\xi).
  \end{equation}
Since $c_\alpha=\epsilon_\alpha\eta(W_\alpha)=0$, then $-\overline{H}\,\overline{\phi} W_\alpha=(\overline{\nabla}_{W_\alpha} \overline{\phi})\xi+W_\alpha$ and the first term on the right hand side of (\ref{v2}) therefore simplifies as follows using (\ref{eqz})
 \begin{align}
  -\overline{g}(\overline{H}\,\overline{\phi} W_\alpha, W_\alpha)&= \overline{g}((\overline{\nabla}_{W_\alpha} \overline{\phi})\xi, W_\alpha)+\overline{g}(W_\alpha, W_\alpha) \nonumber\\
 &= -\overline{g}(\xi, (\overline{\nabla}_{W_\alpha} \overline{\phi})W_\alpha)+\overline{g}(W_\alpha, W_\alpha)\nonumber\\
 &= -\overline{g}(\xi,\bar{g}(W_\alpha, W_\alpha)\xi) +\overline{g}(W_\alpha, W_\alpha) \nonumber\\
 &= -\overline{g}(W_\alpha, W_\alpha)+\overline{g} (W_\alpha, W_\alpha)=0.
\end{align}
Then substituting $ \overline{g}(\overline{H}\,\overline{\phi} W_\alpha, W_\alpha)=0$ in (\ref{v2}) we obtain
  \begin{equation}\label{v16}
  g(\overline{\phi} W_\alpha,\overline{\phi} W_\alpha)= \epsilon_\alpha h_\alpha^s(\overline{\phi} W_\alpha,\xi).
  \end{equation}
By virtue of the fact that $M$ is totally umbilical in $\overline{M}$, (\ref{v16}) yields
 \begin{equation}\label{v17}
   g(\overline{\phi} W_\alpha,\overline{\phi} W_\alpha)=\epsilon_\alpha\mathscr{H}_\alpha^s  g(\overline{\phi} W_\alpha,\xi)=0.
  \end{equation}
Then, simplifying (\ref{v17}) while considering $\eta(W_\alpha)=0$, we get $g(\overline{\phi}W_\alpha, \overline{\phi}W_\alpha)=g(W_\alpha, W_\alpha)=\epsilon_\alpha=0$, which is a contradiction. 
\end{proof}
We notice from the above theorem that if $\xi$ is tangent to $M$, then $\bar{g}(\overline{H}\,\overline{\phi} W_\alpha, W_\alpha)$ $=0$. It is easy to see that $\bar{g}(\overline{H}X, W_\alpha)=0$, for all $X\in\Gamma(\overline{\phi}\mathcal{S})$. Hence, $\overline{H}X$ has no component along $\mathcal{S}$ for all $X\in\Gamma(\overline{\phi}\mathcal{S})$.
\begin{corollary}\label{T5}
   There exist no totally geodesic proper QGCR-lightlike submanifolds $(M,g,S(TM),S(TM^\perp))$ of an indefinite nearly Sasakian manifold $(\overline{M},\overline{\phi},$ $\eta,\xi,\overline{g})$ with the structure vector field $\xi$ tangent to $M$.
\end{corollary}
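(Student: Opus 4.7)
The plan is to observe that the corollary follows essentially for free from Theorem \ref{Thm1}, since total geodesicity (i.e.\ $h\equiv 0$) is a strictly stronger hypothesis than total umbilicity with vanishing transversal curvature vector $\mathcal{H}=0$. Thus, the statement of the corollary is really a direct specialization of Theorem \ref{Thm1}. Nevertheless, a slightly shorter direct argument is available by retracing the proof of Theorem \ref{Thm1} and noting exactly where total geodesicity produces the contradiction.

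Concretely, I would assume for contradiction that such an $M$ exists with $\xi\in\Gamma(TM)$. Then the decomposition (\ref{eq2}) reduces to $\xi=\xi_R+\xi_S$, i.e.\ $b_i=c_\alpha=0$, and in particular $\eta(W_\alpha)=0$ for every $\alpha$. The derivation leading up to equation (\ref{v16}),
\begin{equation*}
 g(\overline{\phi} W_\alpha,\overline{\phi} W_\alpha)= \epsilon_\alpha h_\alpha^s(\overline{\phi} W_\alpha,\xi),
\end{equation*}
uses only (\ref{v10}), (\ref{eq11}), (\ref{eqz}), and the tangency of $\xi$, with no appeal whatsoever to the umbilical hypothesis. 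Under total geodesicity one has $h\equiv 0$, hence $h_\alpha^s\equiv 0$, so the right-hand side vanishes outright. On the other hand, (\ref{equa2}) combined with $\eta(W_\alpha)=0$ yields $g(\overline{\phi}W_\alpha,\overline{\phi}W_\alpha)=\overline{g}(W_\alpha,W_\alpha)=\epsilon_\alpha\ne 0$, contradicting the vanishing of the right-hand side.

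No genuine obstacle is expected here: the contradiction actually appears more quickly than in Theorem \ref{Thm1}, since we skip the intermediate step of substituting $h_\alpha^s=\mathscr{H}_\alpha^s g$ and verifying that $g(\overline{\phi}W_\alpha,\xi)=0$ via skew-symmetry of $\overline{\phi}$. The only care needed is to confirm that every step in the proof of Theorem \ref{Thm1} up to (\ref{v16}) is independent of the umbilical assumption, which is immediate by inspection.
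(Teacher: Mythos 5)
Your proposal is correct and matches the paper's intent: the paper gives no separate proof of Corollary \ref{T5}, treating it as an immediate specialization of Theorem \ref{Thm1} (total geodesicity being total umbilicity with $\mathcal{H}=0$), which is exactly your first observation. Your supplementary direct argument --- noting that everything up to (\ref{v16}) is independent of the umbilical hypothesis and that $h^s_\alpha\equiv 0$ then forces $\epsilon_\alpha=0$ --- is a valid, slightly streamlined version of the same computation rather than a genuinely different route.
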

 Using Theorem \ref{Thm1} and Corollary \ref{T5} above we get the following non-existence theorem;
\begin{theorem}
   There exist no totally umbilical or totally geodesic proper QGCR-lightlike submanifolds $(M,g,S(TM)$ $,S(TM^\perp))$ of an indefinite nearly Sasakian manifold $(\overline{M},\overline{\phi},\eta,\xi,\overline{g})$ with the structure vector field $\xi$ tangent to $M$.
\end{theorem}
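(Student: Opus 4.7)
The plan is to observe that this final theorem is merely the unification of Theorem \ref{Thm1} and Corollary \ref{T5}, so no new machinery is needed; I would simply note that the two non-existence statements together cover all cases in the theorem. First I would recall that by the definition given in \eqref{eq17}, a totally geodesic lightlike submanifold is precisely the degenerate case of a totally umbilical one in which the transversal curvature vector $\mathcal{H}$ vanishes identically, so the totally geodesic case already follows from Theorem \ref{Thm1} via Corollary \ref{T5}.

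For the totally umbilical part, I would invoke Theorem \ref{Thm1} directly. The key argument there, which I would re-emphasize rather than reprove, is the chain running from \eqref{eq15} through \eqref{v17}: testing the decomposition $-\overline{\phi}X = \overline{H}X + \nabla_X\xi + h(X,\xi)$ against $W_\alpha \in \Gamma(\mathcal{S})$, then specialising to $X = \overline{\phi}W_\alpha$, reduces the question to whether $g(\overline{\phi}W_\alpha,\overline{\phi}W_\alpha)$ can vanish. The identity $\overline{g}(\overline{H}\,\overline{\phi}W_\alpha,W_\alpha)=0$ follows from skew-symmetry of $\overline{\nabla}\,\overline{\phi}$ and the nearly Sasakian condition \eqref{eqz}, and the totally umbilical hypothesis then forces the right-hand side of \eqref{v17} to vanish, contradicting $\epsilon_\alpha \neq 0$ for $W_\alpha \in \Gamma(S(TM^\perp))$.

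Combining these two observations gives the combined non-existence: any totally umbilical proper QGCR-lightlike submanifold is ruled out by Theorem \ref{Thm1}, and any totally geodesic one is ruled out as the special case $\mathcal{H}=0$, which is a fortiori totally umbilical. Since the proper QGCR condition ensures $\mathcal{S} \neq \{0\}$ (so that $W_\alpha$ exists to carry the contradiction through), both arguments apply. There is no genuine obstacle here; the only subtlety worth flagging is to make explicit the inclusion ``totally geodesic $\Rightarrow$ totally umbilical'' so the reader sees that a single appeal to Theorem \ref{Thm1} suffices, with Corollary \ref{T5} recorded for emphasis.
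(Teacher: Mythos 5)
Your proposal is correct and takes essentially the same route as the paper, which states this theorem as an immediate consequence of Theorem \ref{Thm1} and Corollary \ref{T5} with no further argument. Your explicit remark that the totally geodesic case is the special case $\mathcal{H}=0$ of the totally umbilical one (so a single appeal to Theorem \ref{Thm1} suffices) is a correct and harmless clarification.
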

When the structure vector field $\xi$ is normal, we have the following.
\begin{theorem}
   There exist no proper QGCR-lightlike submanifolds $(M,g,S(TM)$, $S(TM^\perp))$ of an indefinite nearly Sasakian manifold $(\overline{M},\overline{\phi},\eta,\xi,\overline{g})$ with the structure vector field $\xi$ normal to $M$.
\end{theorem}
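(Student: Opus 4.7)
I will argue by contradiction in the spirit of Theorem~\ref{Thm1}. Assume $M$ is a proper QGCR-lightlike submanifold with $\xi$ normal to $M$. The first task is to pin down where $\xi$ sits inside the transversal bundle. By (\ref{eq2}), normality removes the tangential terms, so $\xi=\xi_{l}+\xi_{S^{\perp}}$. I refine this using the splittings $l\mathrm{tr}(TM)=\mathcal{L}\perp\nu$, with $\overline{\phi}\nu=\nu$, and $S(TM^{\perp})=\mathcal{S}\perp\mu$, where $\mu$ is the orthogonal complement of $\mathcal{S}$; a short computation using the skew-symmetry of $\overline{\phi}$ inherited from (\ref{equa2}) shows $\overline{\phi}\mu\subset\mu$. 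Since $\overline{\phi}\mathcal{L}$ and $\overline{\phi}\mathcal{S}$ lie in $S(TM)$, $\overline{\phi}\nu\subset l\mathrm{tr}(TM)$, and $\overline{\phi}\mu\subset S(TM^{\perp})$ sit in the mutually orthogonal summands of (\ref{eq08}), the equation $\overline{\phi}\xi=0$ forces each of these pieces to vanish separately. The non-nullity $\overline{g}(\xi,\xi)=\eta(\xi)=1$ excludes $\xi$ from the null subbundles $\mathcal{L}$ and $\nu$, while $\xi\in\mathcal{S}$ would collapse the direct sum $\overline{D}=\overline{\phi}\mathcal{S}\oplus\overline{\phi}\mathcal{L}$ in Definition~\ref{def2}. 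Hence $\xi\in\Gamma(\mu)$ with $|\xi|^{2}=1$.

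For the second step I exploit $\mathcal{S}\neq\{0\}$ (properness). Pick $W\in\Gamma(\mathcal{S})$ with $\overline{g}(W,W)=\epsilon_{W}\neq 0$. Since $\xi\in\mu\perp\mathcal{S}$, $\eta(W)=0$, and so $\overline{\phi}^{2}W=-W$ and $\overline{\phi}W\in S(TM)$. Feeding $X=\overline{\phi}W$ into (\ref{v10}) yields
\[
\overline{\nabla}_{\overline{\phi}W}\,\xi+\overline{H}(\overline{\phi}W)=W.
\]
Taking the $\overline{g}$-product of this identity with $W$, the term $\overline{g}(\overline{H}(\overline{\phi}W),W)$ vanishes by the same skew-symmetry cancellation used around (\ref{v16}) (relying on $\overline{H}\overline{\phi}+\overline{\phi}\overline{H}=0$, skew-symmetry of $\overline{H}$, and $\eta(W)=0$). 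Expanding the surviving term through (\ref{eq32}) together with $\xi=\sum_{W_{\alpha}\in\mu}c_{\alpha}W_{\alpha}$, and using $\overline{g}(\xi,W)=0$ together with $\overline{\phi}\mu\subset\mu$, the goal is to reach $\epsilon_{W}=0$, contradicting the non-nullity of $W$.

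The main obstacle is this last simplification. Without the totally umbilical hypothesis of Theorem~\ref{Thm1}, the cross-connection terms $\sigma_{\alpha\beta}(\overline{\phi}W)$ between $W_{\alpha}\in\mu$ and $W_{\beta}\in\mathcal{S}$ do not disappear by inspection. I expect to close the gap by combining the computation above with the nearly Sasakian identity (\ref{eqz}) applied at $X=Y=W$, which gives $(\overline{\nabla}_{W}\overline{\phi})W=\epsilon_{W}\xi$; pairing this with $W$ via $\overline{g}$ and subtracting from the earlier equation should cancel the obstructing connection terms and isolate the desired contradiction.
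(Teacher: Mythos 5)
Your proposal fails at the very first step: you have confused the normal bundle $TM^{\perp}$ with the transversal bundle $\mathrm{tr}(TM)$. For a lightlike submanifold $TM^{\perp}$ is \emph{not} complementary to $TM$; by construction $S(TM^{\perp})$ is complementary to $\mathrm{Rad}\,TM$ \emph{inside} $TM^{\perp}$, so $TM^{\perp}=\mathrm{Rad}\,TM\perp S(TM^{\perp})$, while $l\mathrm{tr}(TM)$ is not contained in $TM^{\perp}$ at all (indeed $\overline{g}(N_i,E_i)=1$ with $E_i$ tangent). Hence ``$\xi$ normal'' gives $\xi=\xi_{R}+\xi_{S^{\perp}}$ with $b_i=\eta(E_i)=0$ but $a_i=\eta(N_i)$ in general nonzero --- exactly the decomposition (\ref{eqb}) used in the paper. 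Your starting point $\xi=\xi_{l}+\xi_{S^{\perp}}$ is the decomposition for $\xi$ \emph{transversal}, which is the hypothesis of a different theorem later in Section \ref{existence}. Everything downstream of this (the localization of $\xi$ in $\mu$, the conclusion $\eta(W)=0$ for $W\in\Gamma(\mathcal{S})$, the choice of test vectors in $\mathcal{S}$ and $\mu$) is therefore built on the wrong case. In the correct setting one has $c_{\alpha}=\epsilon_{\alpha}\eta(W_{\alpha})\neq 0$ on $\mathcal{S}$, so your key simplification $\eta(W)=0$ is unavailable.

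Beyond the setup error, the argument is not closed even on its own terms: you state that ``the goal is to reach $\epsilon_{W}=0$'' and that you ``expect to close the gap'' by combining (\ref{v10}) with (\ref{eqz}) at $X=Y=W$. That last identity gives $(\overline{\nabla}_{W}\overline{\phi})W=\epsilon_{W}\xi-\eta(W)W$, and pairing it with $W$ yields $0=0$ by the skew-symmetry of $\overline{\nabla}\,\overline{\phi}$, so it cannot cancel the $\sigma_{\alpha\beta}$ terms you identify as the obstruction. The paper's proof proceeds quite differently: it differentiates $\xi=\xi_{R}+\xi_{S^{\perp}}$, pairs the result with $E_k\in\Gamma(D_2)$ and $\overline{\phi}N_k\in\Gamma(S(TM))$ for $N_k\in\Gamma(\mathcal{L})$, adds the two identities so that the second fundamental form terms cancel and the skew-symmetry of $\overline{H}$ yields $\overline{g}(\overline{H}\,\overline{\phi}N_k,E_k)=1$; this forces $\overline{H}\,\overline{\phi}N_k=f_kN_k$ with $f_k\neq 0$, and pairing with $\xi$ (using $\overline{H}\xi=0$) gives $a_k=0$, a contradiction. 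You would need to restart from the correct decomposition and work with the pair $(D_2,\mathcal{L})$ rather than $(\mathcal{S},\mu)$.
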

\begin{proof}
Suppose that  $\xi\in \Gamma(TM^\perp)$, then 
\begin{equation}\label{eqb}
 \xi=\xi_R+\xi_{S^\perp},\;\;\xi_l=\xi_S=0, \;\; b_i=0, \;\; a_i\neq 0\hspace{.2cm}\mbox{and} \;\; c_{\alpha} \neq 0.
\end{equation}
Differentiating the first equation of (\ref{eqb}) with respect to $X$ and using (\ref{eq10}), (\ref{eq11}) and (\ref{eq32}), we get 
   \begin{align}\label{a} 
       -\overline{\phi} X &= \sum_{i=1}^rX(a_i)E_i+\sum_{\alpha=r+1}^nX(c_\alpha)W_\alpha\nonumber\\
              &+\sum_{i=1}^ra_i \{\nabla_X E_i +\sum_{j=1}^r h_j^l(X,E_i)N_j+\sum_{\beta=r+1}^n h_\beta^s(X,E_i)W_\beta\}\nonumber\\
              &+\sum_{\alpha=r+1}^nc_\alpha\{-A_{W_\alpha} X+\sum_{i=1}^r \varphi_{\alpha i}(X) N_i +\sum_{\beta=r+1}^n \sigma_{\alpha\beta}(X)W_\beta\}+\overline{H}X, 
      \end{align}      
for all $X\in\Gamma(TM)$. Taking the $\overline{g}$-product of (\ref{a}) with respect to $E_k$ and $\overline{\phi} N_k\in \Gamma(S(TM))$ in turn, where $N_k\in\Gamma(\mathcal{L})$, we get 
   \begin{equation}\label{eqf}
       \overline{g}(X,\overline{\phi} E_k)=-\sum_{i=1}^r a_i h_i^l(X,E_k)-\sum_{\alpha=r+1}^nc_\alpha h_\alpha^s(X,E_k)+\overline{g}(\overline{H}X,E_k).
   \end{equation}
Replacing $X$ with $\overline{\phi} N_k$ in (\ref{eqf}) we obtain
   \begin{equation}\label{b}
       \overline{g}(N_k,E_k)=-\sum_{i=1}^r a_i h_i^l(\overline{\phi} N_k,E_k)-\sum_{\alpha=r+1}^nc_\alpha h_\alpha^s(\overline{\phi} N_k,E_k)+\overline{g}(\overline{H}\,\overline{\phi} N_k,E_k).
   \end{equation}
The $\overline{g}$-product with $\overline{\phi} N_k$ yields
   \begin{align}\label{c} 
        -\overline{g}(\overline{\phi} X, \overline{\phi}N_k)&= -\sum_{i=1}^ra_i\overline{g}(A_{E_i}^{*}X, \overline{\phi} N_k)+\sum_{i=1}^{r}a_{i} \sum_{j=1}^{r}h_{j}^{l}(X,E_i)\lambda_j(\overline{\phi} N_k)\nonumber\\
    &-\sum_{\alpha=r+1}^nc_\alpha \overline{g}(A_{W_\alpha}X, \overline{\phi} N_k)+\sum_{\alpha=r+1}^nc_\alpha \sum_{j=1}^r\varphi_{\alpha j}(X)\lambda_j(\overline{\phi} N_k)\nonumber\\
    &+\overline{g}(\overline{H}X, \overline{\phi} N_k).
    \end{align}
   Now, using (\ref{eqc}), (\ref{eqe}) and (\ref{eq07}) in (\ref{c}), we obtain
   \begin{align*} 
       \overline{g}(\overline{\phi} X,\overline{\phi} N_k)& = \sum_{i=1}^ra_i\overline{g}(A_{E_i}^*X, \overline{\phi} N_k)+\sum_{\alpha=r+1}^nc_\alpha \overline{g}(A_{W_\alpha}X, \overline{\phi} N_k)\\
                                     &-\overline{g}(\overline{H}X,\overline{\phi} N_k),
   \end{align*}
which on replacing $X$ with $E_k$  and simplifying gives
    \begin{equation}\label{eqr}
  \begin{aligned}
     \overline{g}(E_k,N_k)=&\;b_ka_k+\sum_{i=1}^ra_ih_i^l(E_k,\overline{\phi} N_k)+\sum_{\alpha=r+1}^nc_\alpha h_\alpha^s(E_k, \overline{\phi}N_k)\\
       &-\overline{g}(\overline{H}E_k,\overline{\phi} N_k).
     \end{aligned}
    \end{equation}
Adding (\ref{b}) to (\ref{eqr}) yields
    \begin{equation}\label{g1}
      2\overline{g}(E_k,N_k)=\overline{g}(\overline{H}\,\overline{\phi}N_k,E_k)-\overline{g}(\overline{H}E_k,\overline{\phi} N_k).
    \end{equation}
But $\overline{H}$ is skew-symmetric and thus (\ref{g1}) becomes
   \begin{equation}\label{g2}
      \overline{g}(E_k,N_k)=\overline{g}(\overline{H}\,\overline{\phi} N_k,E_k)=1.
    \end{equation}
By virtue of (\ref{g2}), it easy to see that $\overline{H}\,\overline{\phi} N_k\in\Gamma(l\mathrm{tr}(TM))$, particularly, in the direction of $N_k$. Hence, there exist a non vanishing smooth function $f_k$ such that $\overline{H}\,\overline{\phi} N_k=f_{k} N_k$. Taking the $\overline{g}$-product of this equation with respect to $\xi$, we get $0=\overline{g}(\overline{H}\,\overline{\phi} N_k,\xi)=\overline{g}(f_k N_k,\xi)=f_k\overline{g}( N_k,\xi)=f_k a_k$, from which $a_k=0$, a contradiction. Therefore, in a proper QGCR-lightlike submanifold of an indefinite nearly Sasakian manifold, $\xi$ does not belong to $TM^\perp$.
\end{proof}
 In particular, we have the following.
 \begin{corollary}
   There  exist no totally umbilical or totally geodesic proper QGCR-lightlike submanifolds $(M, g, S(TM),  S(TM^\perp))$ of an indefinite nearly Sasakian manifold $(\overline{M},\overline{\phi},\eta,\xi,\overline{g})$ with the structure vector field $\xi$ normal to $M$.
 \end{corollary}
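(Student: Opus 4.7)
The plan is to deduce this corollary directly from the preceding theorem, which is strictly stronger than the corollary. That theorem establishes the non-existence of \emph{any} proper QGCR-lightlike submanifold of an indefinite nearly Sasakian manifold $(\overline{M}, \overline{\phi}, \eta, \xi, \overline{g})$ with the structure vector field $\xi$ normal to $M$, with no umbilicity or geodesic hypothesis required. Since every totally umbilical (respectively, totally geodesic) proper QGCR-lightlike submanifold is in particular a proper QGCR-lightlike submanifold, the broader non-existence result precludes existence within either subclass.

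More concretely, I would argue by contradiction. Assume that there exists a totally umbilical or totally geodesic proper QGCR-lightlike submanifold $(M, g, S(TM), S(TM^\perp))$ of $\overline{M}$ such that $\xi \in \Gamma(TM^\perp)$. Discarding the additional structural assumption, $M$ is still a proper QGCR-lightlike submanifold of $\overline{M}$ with $\xi$ normal to $M$, which is exactly the configuration that the preceding theorem rules out. This contradiction establishes the corollary. No additional calculation with the shape operators $A_{E_i}^*$, $A_{W_\alpha}$, the transversal curvature vector $\mathcal{H}$, or the tensor $\overline{H}$ is needed.

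There is no real obstacle here, since the substantive work has already been carried out in the proof of the preceding theorem. That proof produced, via the decomposition $\xi = \xi_R + \xi_{S^\perp}$ and the Gauss--Weingarten equations, the identity $\overline{g}(E_k, N_k) = \overline{g}(\overline{H}\,\overline{\phi} N_k, E_k) = 1$, forcing $\overline{H}\,\overline{\phi} N_k$ to lie in $\Gamma(l\mathrm{tr}(TM))$ along the $N_k$-direction; pairing with $\xi$ then yielded $a_k = 0$, contradicting the non-vanishing of $a_k$ built into the normality of $\xi$. The corollary is thus a direct specialization, recorded for emphasis because totally umbilical and totally geodesic submanifolds are the two curvature-type classes of principal interest in the paper.
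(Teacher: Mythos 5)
Your proposal is correct and matches the paper's intent: the paper states this corollary immediately after the theorem ruling out \emph{all} proper QGCR-lightlike submanifolds with $\xi$ normal to $M$, offering no separate proof precisely because the corollary is the trivial specialization you describe. Your recap of the underlying argument (the identity $\overline{g}(E_k,N_k)=\overline{g}(\overline{H}\,\overline{\phi}N_k,E_k)=1$ forcing $a_k=0$) accurately reflects the theorem's proof, so nothing further is needed.
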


\begin{corollary}
  Let $(M,g,S(TM),S(TM^\perp))$ be a proper  QGCR-lightlike submanifold of an indefinite nearly Sasakian manifold $(\overline{M},\overline{\phi},\eta,\xi,\overline{g})$. If the structure vector field $\xi$ is normal to $M$, then
 \begin{enumerate}
  \item $ \overline{H}X$ belongs to $l\mathrm{tr}(TM)$ for all $X\in\Gamma(\overline{\phi}\mathcal{L})$.  
  \item $ \overline{H}X$ belongs to $\mathrm{Rad}\,TM$ for all $X\in\Gamma(\overline{\phi}D_2)$.
 \end{enumerate}
\end{corollary}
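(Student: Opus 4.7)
The plan is to reuse the computational setup of the preceding theorem. Since $\xi\in\Gamma(TM^{\perp})$, we have $\xi=\xi_R+\xi_{S^\perp}$ with $b_i=0$, $a_i\neq 0$ and $c_\alpha\neq 0$, and differentiating the decomposition of $\xi$ using (\ref{v10}), (\ref{eq11}), (\ref{eq32}) yields equation (\ref{a}). This identity lets us read off $\overline{H}X$ modulo the $\nabla$-, $A$-, and $\overline{\phi}X$-terms, and is the engine of the whole argument.

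For item (1), I would take $X=\overline{\phi}N_k\in\Gamma(\overline{\phi}\mathcal{L})$. The proof of the preceding theorem already establishes, via (\ref{g2}), that $\overline{g}(\overline{H}\,\overline{\phi}N_k,E_k)=1$, pinning down a nonzero $N_k$-component of $\overline{H}\,\overline{\phi}N_k$ in $l\mathrm{tr}(TM)$. To upgrade this to $\overline{H}\,\overline{\phi}N_k\in\Gamma(l\mathrm{tr}(TM))$, I would take $\overline{g}$-products of (\ref{a}) in turn against representative basis vectors of each remaining summand in (\ref{eq08}), namely $D_0$, $\overline{\phi}D_2$, $\overline{\phi}\mathcal{L}$, $S(TM^\perp)$, $\mathrm{Rad}\,TM$, and $N_j$ for $j\neq k$. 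Each such projection is driven to zero by combining the skew-symmetry of $\overline{H}$, the anti-commutation $\overline{\phi}\,\overline{H}+\overline{H}\,\overline{\phi}=0$, and $\overline{H}\xi=\eta\circ\overline{H}=0$, together with the nearly Sasakian identity (\ref{eqz}) whenever terms of the form $(\overline{\nabla}_Z\overline{\phi})\xi$ appear.

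For item (2), I would run the symmetric computation with $X=\overline{\phi}E\in\Gamma(\overline{\phi}D_2)$. Since $b_i=0$, we have $-\overline{\phi}^2 E=E\in\Gamma(D_2)$, so (\ref{a}) places an explicit $E$-term on the radical side of $\overline{H}\,\overline{\phi}E$. Projecting against basis vectors of $D_0$, $\overline{\phi}\mathcal{L}$, $\overline{\phi}D_2$, $S(TM^\perp)$, and $l\mathrm{tr}(TM)$ as in item (1) forces every non-radical component to vanish, giving $\overline{H}\,\overline{\phi}E\in\Gamma(\mathrm{Rad}\,TM)$. The main technical obstacle is purely bookkeeping: each projection of (\ref{a}) mixes $\overline{H}X$ with the induced connection and the shape operators $A_{W_\alpha}$, $A^{*}_{E_i}$, so one must carefully invoke the skew-symmetry of $\overline{H}$ and its anti-commutation with $\overline{\phi}$ to push the unwanted terms onto directions where they evaluate to zero. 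No individual step is deep, but tracking the cancellations across all summands of (\ref{eq08}) is the computational heart of the proof.
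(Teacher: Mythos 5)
Your starting point is the right one and is, in fact, all the paper itself offers: the corollary carries no proof of its own and is meant to be read off from the computations inside the proof of the preceding non-existence theorem, where \eqref{g2} gives $\overline{g}(\overline{H}\,\overline{\phi}N_k,E_k)=\overline{g}(E_k,N_k)=1$ and the authors immediately conclude $\overline{H}\,\overline{\phi}N_k=f_kN_k\in\Gamma(l\mathrm{tr}(TM))$. So for item (1) you and the paper coincide up to \eqref{g2}. You correctly observe that \eqref{g2} only pins down the component of $\overline{H}\,\overline{\phi}N_k$ along $N_k$ (pairing with $E_k$ detects nothing of $S(TM)$, $S(TM^{\perp})$ or $\mathrm{Rad}\,TM$), and you propose to finish by pairing \eqref{a} with basis vectors of every remaining summand of \eqref{eq08}.

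That finishing step is where the gap lies. Pairing \eqref{a} (with $X=\overline{\phi}N_k$) against, say, $X_a\in\Gamma(D_0)$, the $\overline{\phi}$-term and the $\eta$-terms do vanish (since $\eta$ annihilates $TM$ when $\xi$ is normal), but what survives is $\sum_i a_i\,\overline{g}(\nabla_{\overline{\phi}N_k}E_i,X_a)-\sum_\alpha c_\alpha\,\overline{g}(A_{W_\alpha}\overline{\phi}N_k,X_a)$, i.e.\ genuine second-fundamental-form and shape-operator terms such as $a_i\,h^l_i(\overline{\phi}N_k,X_a)$ and $c_\alpha\epsilon_\alpha\,h^s_\alpha(\overline{\phi}N_k,X_a)$. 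None of the identities you invoke (skew-symmetry of $\overline{H}$, $\overline{H}\,\overline{\phi}+\overline{\phi}\,\overline{H}=0$, $\overline{H}\xi=0$, $\eta\circ\overline{H}=0$, or \eqref{eqz}) acts on these extrinsic quantities, so the claimed cancellations do not materialize; the same problem recurs for the $S(TM^{\perp})$ and $N_j$ projections, and for the whole of item (2), for which the paper records no computation at all. As written, your argument establishes only that $\overline{H}\,\overline{\phi}N_k$ has a nonzero component along $N_k$, not full membership in $l\mathrm{tr}(TM)$. To be fair, the paper is in exactly the same position: its ``it is easy to see'' after \eqref{g2} is precisely the step you have not supplied, so your proposal reproduces the paper's argument together with its gap rather than closing it. A complete proof would have to control the $S(TM)$-, $S(TM^{\perp})$- and $\mathrm{Rad}\,TM$-components of $\overline{H}\,\overline{\phi}N_k$ (and dually of $\overline{H}\,\overline{\phi}E_k$) by some additional input, not by the algebraic properties of $\overline{H}$ alone.
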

\begin{theorem}
   There exist no totally umbilical proper QGCR-lightlike submanifolds $(M,g,S(TM), S(TM^\perp))$, with totally umbilical screen distributions, of an indefinite nearly Sasakian manifold $(\overline{M},\overline{\phi},\eta,\xi,\overline{g})$ with the structure vector field $\xi$ transversal to $M$.
\end{theorem}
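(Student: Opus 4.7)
The plan is proof by contradiction, in the spirit of Theorem \ref{Thm1} and its normal counterpart. Assume such an $M$ exists. Since $\xi$ is transversal to $M$, the tangential pieces in (\ref{eq2})--(\ref{eq0}) vanish (so $a_i=0$), leaving
$$
\xi \;=\; \sum_{i=1}^{r} b_i\,N_i \;+\; \sum_{\alpha=r+1}^{n} c_\alpha\,W_\alpha.
$$
The engine will once again be identity (\ref{v10}), $\overline{\nabla}_X\xi+\overline{H}X=-\overline{\phi}X$, tested on $X\in\Gamma(D_0)$ (nonzero because $M$ is proper), by taking the $\overline{g}$--product with $\overline{\phi}X\in\Gamma(D_0)\subset\Gamma(TM)$.

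Expanding $\overline{\nabla}_X\xi$ via (\ref{eq31})--(\ref{eq32}), its tangential part is $-\sum_i b_i A_{N_i}X-\sum_\alpha c_\alpha A_{W_\alpha}X$, while the transversal remainder is annihilated by the pairing. Umbilicity of the screen, applied through (\ref{k}), gives $\overline{g}(A_{N_i}X,\overline{\phi}X)=\mathscr{H}_i^{*}\,g(X,\overline{\phi}X)=0$, the last step because $\overline{\phi}$ is $\overline{g}$--skew. Umbilicity of $M$, applied through (\ref{eq07}), gives
$$
\overline{g}(A_{W_\alpha}X,\overline{\phi}X)\;=\;\epsilon_\alpha\,\mathscr{H}_\alpha^{s}\,g(X,\overline{\phi}X)+\sum_{i}\varphi_{\alpha i}(X)\lambda_i(\overline{\phi}X)\;=\;0,
$$
the first term again by skewness and the second because $\lambda_i(\overline{\phi}X)=0$ for $X\in D_0$. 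The latter follows by decomposing $l\mathrm{tr}(TM)=\mathcal{L}\perp\nu$ with $\overline{\phi}\nu=\nu$, and using $D_0\perp\overline{\phi}\mathcal{L}$ inside $S(TM)$ together with $S(TM)\perp l\mathrm{tr}(TM)$. Hence the right hand side of the pairing is zero.

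On the other side, $\eta(X)=0$ for $X\in D_0$ forces $\overline{\phi}^{2}X=-X$, so the pairing becomes $-g(X,X)-\overline{g}(\overline{H}X,\overline{\phi}X)$. The last term also vanishes: the Lemma supplies both skew-symmetry of $\overline{H}$ and $\overline{H}\overline{\phi}+\overline{\phi}\overline{H}=0$, whose combination yields $\overline{g}(\overline{H}X,\overline{\phi}X)=-\overline{g}(\overline{H}X,\overline{\phi}X)$. Therefore $g(X,X)=0$ for every $X\in\Gamma(D_0)$; polarization then forces $g|_{D_0}\equiv 0$, contradicting the non-degeneracy of $D_0$ demanded by Definition \ref{def2}.

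I expect the main obstacle to be the clean verification that all cross terms $\varphi_{\alpha i}(X)\lambda_i(\overline{\phi}X)$ truly drop out, since this requires tracking carefully how $\overline{\phi}$ acts across the subbundles $\mathcal{L}$ and $\nu$ of $l\mathrm{tr}(TM)$ and invoking the precise orthogonality of $D_0$ inside $S(TM)$. Both umbilicity hypotheses are genuinely used: umbilicity of $M$ controls the $A_{W_\alpha}$ sum, while umbilicity of the screen is what is needed to control the $A_{N_i}$ sum, so relaxing either hypothesis would break the contradiction.
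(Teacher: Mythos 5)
Your argument is correct, and it reaches the contradiction by a genuinely different route than the paper. The paper also starts from $\xi=\xi_l+\xi_{S^\perp}$ and the identity $\overline{\nabla}_X\xi+\overline{H}X=-\overline{\phi}X$, but it tests the resulting expansion on $X=E_k\in\Gamma(D_2)$ paired against $\overline{\phi}N_k$ with $N_k\in\Gamma(\mathcal{L})$; there the $\overline{H}$--term survives, and the proof must argue that $\overline{\phi}\,\overline{H}E_k$ lies along $E_k$ before pairing with $\xi$ to force $b_k=0$, contradicting $b_k\neq 0$. You instead pair with $\overline{\phi}X$ for $X\in\Gamma(D_0)$, which buys you two things: the term $\overline{g}(\overline{H}X,\overline{\phi}X)$ dies outright from skew-symmetry of $\overline{H}$ combined with $\overline{H}\,\overline{\phi}+\overline{\phi}\,\overline{H}=0$, so you never need to locate $\overline{H}X$ in the ambient decomposition, and the final contradiction ($g(X,X)=0$ on all of $D_0$, hence $g|_{D_0}$ degenerate by polarization) is the same mechanism the paper itself uses in the tangent case of Theorem \ref{Thm1}, so the two non-existence results become structurally parallel. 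Both umbilicity hypotheses enter in the same places in either version (umbilicity of $M$ kills the $A_{W_\alpha}$ contribution via (\ref{eq07}), umbilicity of the screen kills the $A_{N_i}$ contribution via (\ref{k})). One small simplification you could make: $\lambda_i(\overline{\phi}X)=\overline{g}(\overline{\phi}X,N_i)=0$ follows immediately from $\overline{\phi}X\in\Gamma(D_0)\subset\Gamma(S(TM))$ and the orthogonality $S(TM)\perp l\mathrm{tr}(TM)$ in (\ref{eq08}); the detour through $l\mathrm{tr}(TM)=\mathcal{L}\perp\nu$ is not needed.
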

\begin{proof}
Suppose that $\xi\in \Gamma(\mathrm{tr}(TM))$ and that $M$ is totally umbilical in $\overline{M}$, then 
\begin{equation}\label{eqA}
 \xi=\xi_l+\xi_{S^\perp},\;\;\xi_R=\xi_S=0, \;\; a_i=0, \;\; b_i\neq 0\;\;\mbox{and} \;\; c_\alpha \neq 0.
\end{equation}
 Differentiating the first equation of  (\ref{eqA}) with respect to $X$ and using (\ref{eq10}), (\ref{eq31}) and (\ref{eq32}), we get 
 \begin{align*}
  -\overline{\phi} X & = \sum_{i=1}^rX(b_i)N_i+\sum_{\alpha=r+1}^nX(c_\alpha)W_\alpha+\sum_{i=1}^rb_i \{-A_{N_i} X+\sum_{j=1}^r \tau_{ij}(X) N_j\\
         &+\sum_{\alpha=r+1}^n \rho_{i\alpha}(X)W_\alpha\}+\sum_{\alpha=r+1}^nc_\alpha\{-A_{W_\alpha} X+\sum_{i=1}^r \varphi_{\alpha i}(X) N_i\\
         &+\sum_{\beta=r+1}^n \sigma_{\alpha\beta}(X)W_\beta\}+\overline{H}X,
 \end{align*}
for all $X\in\Gamma(TM)$. Now, taking the $\overline{g}$-product of the above equation with respect to $\overline{\phi} N_k\in \Gamma(S(TM))$ where $N_k\in\Gamma(\mathcal{L})$, we get
    \begin{align}\label{eqB} 
     -\overline{g}(\overline{\phi} X, \overline{\phi}N_k)&= -\sum_{i=1}^rb_i g(A_{N_i}X, \overline{\phi}N_k)-\sum_{\alpha=r+1}^nc_\alpha g(A_{W_\alpha}X, \overline{\phi}N_k)\nonumber\\
      &+\overline{g}(\overline{H}X,\overline{\phi} N_k). 
    \end{align}
Replacing $X$ with $E_k\in\Gamma(D_2)$ in (\ref{eqB}), we obtain    
    \begin{align}\label{j} 
     -\overline{g}(\overline{\phi} E_k, \overline{\phi} N_k)& = -\sum_{i=1}^rb_i g(A_{N_i}E_k,\overline{\phi} N_k)-\sum_{\alpha=r+1}^nc_\alpha g(A_{W_\alpha}E_k,\overline{\phi} N_k)\nonumber\\
     &+\overline{g}(\overline{H}E_k,\overline{\phi} N_k). 
    \end{align}
Substituting (\ref{eqe}) and the first equation of (\ref{k}) in (\ref{j}) gives
      \begin{align}\label{i} 
     -\overline{g}(\overline{\phi} E_k, \overline{\phi} N_k)&= -\sum_{i=1}^rb_i h_i^*(E_k,\overline{\phi} N_k)-\sum_{\alpha=r+1}^nc_\alpha h_\alpha^s(E_k,\overline{\phi} N_k)\nonumber\\
         &+\bar{g}(\overline{H}E_k,\overline{\phi} N_k).
    \end{align}  
Since $M$ is totally umbilical in $\overline{M}$, with a totally umbilical screen, then (\ref{i}) yields
\begin{equation}\label{td}
  -\overline{g}( \overline{\phi}E_k, \overline{\phi} N_k)=\overline{g}(\overline{H}E_k, \overline{\phi}N_k),
\end{equation}
which reduces to $\overline{g}(E_k, N_k)=\overline{g}(\overline{\phi}\,\overline{H}E_k, N_k)=1$.  It is easy to see from this equation that $\overline{\phi}\,\overline{H}E_k\in\Gamma(\mathrm{Rad}\,TM)$. In particular, there exist non vanishing smooth functions $w_{k}$ such that $\overline{\phi}\,\overline{H}E_k=w_k E_k$. Taking the $\overline{g}$--product of this lat equation with respect to $\xi$, we obtain $0=\overline{g}(\overline{\phi}\,\overline{H}E_k,\xi)=w_k\overline{g}(E_k,\xi)=w_{k} b_{k}$. Hence, $b_{k}=0$, and this contradiction completes the proof.
\end{proof}
 \begin{corollary} 
    There exist no totally geodesic proper  QGCR-lightlike submanifolds $(M,g,S(TM),S(TM^\perp))$, with totally geodesic screen distributions, of an indefinite nearly Sasakian manifold $(\overline{M},\overline{\phi},\eta,\xi,\overline{g})$ with the structure vector field $\xi$ transversal to $M$.
 \end{corollary}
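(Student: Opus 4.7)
The plan is to deduce this corollary directly from the preceding theorem, since total geodesy is precisely total umbilicity with vanishing transversal curvature vector. Concretely, if $M$ is totally geodesic in $\overline{M}$ then $h\equiv 0$, which matches the defining identity (\ref{eq17}) with $\mathcal{H}=0$; analogously, a totally geodesic screen distribution is a totally umbilical screen with $\mathcal{H}^{*}=0$. Hence every totally geodesic proper QGCR-lightlike submanifold with totally geodesic screen is a particular instance of the totally umbilical situation ruled out by the previous theorem, and the conclusion is immediate.

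Should a self-contained verification be preferred, I would rerun the steps of the preceding proof with $h=0$ and $h^{*}=0$ in place of the totally umbilical hypotheses. Starting from $\xi=\xi_l+\xi_{S^\perp}$ with $b_i\neq 0$ and $c_\alpha\neq 0$, differentiating this decomposition and substituting (\ref{eq10}), (\ref{eq31}), (\ref{eq32}) gives the analogue of (\ref{eqB}); replacing $X$ by $E_k\in\Gamma(D_2)$ and using (\ref{h1}) together with (\ref{k}) yields the analogue of (\ref{i}). Since by hypothesis all the second fundamental forms $h_i^l$, $h_\alpha^s$ and $h_i^{*}$ vanish, this analogue collapses to $-\overline{g}(\overline{\phi} E_k,\overline{\phi} N_k)=\overline{g}(\overline{H}E_k,\overline{\phi} N_k)$, which simplifies to $\overline{g}(\overline{\phi}\,\overline{H}E_k,N_k)=1$. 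Exactly as in the previous proof this forces $\overline{\phi}\,\overline{H}E_k=w_k E_k$ for some non-vanishing smooth function $w_k$, and pairing with $\xi$ via $\eta\circ\overline{\phi}=0$ produces $w_k b_k=0$, hence $b_k=0$, contradicting $\xi_l\neq 0$.

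The only delicate point is the passage from $h\equiv 0$ to the vanishing of each local bilinear form $h_i^l$, $h_\alpha^s$ and $h_i^{*}$, which is guaranteed by (\ref{13}) together with the analogous screen-invariance of the remaining fundamental forms. Beyond this routine bookkeeping there is no genuine obstacle: the full substance of the corollary lies in recognizing total geodesy as a specialization of total umbilicity for which the preceding theorem applies directly.
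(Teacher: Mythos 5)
Your proposal is correct and matches the paper's (implicit) argument: the corollary is stated without proof precisely because total geodesy of $M$ and of its screen is the special case $\mathcal{H}=0$ (and vanishing screen fundamental forms) of the totally umbilical hypotheses in the preceding theorem, so the non-existence follows at once. Your optional self-contained rerun of that proof with $h=0$ and $h^{*}=0$ is also sound and adds nothing beyond the direct specialization.
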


 Next, we consider the special case $\overline{H}=0$. In particular, the indefinite nearly Sasakian manifold $(\overline{M},\overline{\phi},\eta,\xi,\overline{g})$ with $\overline{H}=0$ becomes Sasakian. An indefinite Sasakian manifold $\overline{M}$ is called an indefinite Sasakian space form, denoted by $\overline{M}(c)$, if it has the constant $\overline{\phi}$-sectional curvature $c$ \cite{ma1}. The curvature tensor $\overline{R}$ of the indefinite space form $\overline{M}(c)$ is given by 
\begin{align}\label{ms1}
       4\overline{R}(\overline{X},\overline{Y})\overline{Z} &= (c+3)\{\overline{g}(\overline{Y},\overline{Z})\overline{X}-\overline{g}(\overline{X},\overline{Z})\overline{Y}\}+(1-c)\{\eta(\overline{X})\eta(\overline{Z})\overline{Y}\nonumber\\
                      &-\eta(\overline{Y})\eta(\overline{Z})\overline{X}+\overline{g}(\overline{X},\overline{Z})\eta(\overline{Y})\xi-\overline{g}(\overline{Y},\overline{Z})\eta(\overline{X})\xi\nonumber\\
                      &+\overline{g}(\overline{\phi}\, \overline{Y},\overline{Z})\overline{\phi}\, \overline{X}+\overline{g}(\overline{\phi}\, \overline{Z},\overline{X})\overline{\phi}\, \overline{Y}-2\overline{g}(\overline{\phi}\, \overline{X},\overline{Y})\overline{\phi}\, \overline{Z}\},
\end{align}   
for any $\overline{X},\overline{Y},\overline{Z}\in \Gamma(T \overline{M})$.

Now, using (\ref{ms1}) we have the following existence theorem.
\begin{theorem}
 Let $(M,g,S(TM),S(TM^\perp))$ be a lightlike submanifold of an indefinite Sasakian space form $\overline{M}(c)$ with $c\neq1$. Then, $M$ is a QGCR-lightlike submanifold of $\overline{M}(c)$ if and only if
 \begin{enumerate}
  \item[(a)] The maximal invariant subspaces of $T_{p}M$, $p\in M$ define a distribution
 \begin{equation*}
      D=D_0\perp D_{1}, 
   \end{equation*}
 where $\mathrm{Rad}\, TM=D_{1}\oplus D_{2}$ and $D_{0}$ is a non-degenerate invariant distribution.
  \item[(b)] There exists a lightlike transversal vector bundle $l\mathrm{tr}(TM)$ such that
 \begin{equation*}
  \overline{g}(\overline{R}(X, Y)E, N) = 0,\;\forall\, X, Y \in\Gamma(D_{0}),\;E \in \Gamma(\mathrm{Rad}\,  TM),\;N\in\Gamma(l\mathrm{tr}(TM)).
 \end{equation*}
 \item[(c)] There exists a vector subbundle $M_{2}$ on $M$ such that 
\begin{equation*}
  \overline{g}(\overline{R}(X,Y)W, W^{\prime}) = 0,\;\;\forall\, W, W^{\prime} \in\Gamma(M_{2}),
 \end{equation*}
where $M_{2}$ is orthogonal to $D$ and $\overline{R}$ is the curvature tensor of $\overline{M}(c)$.
 \end{enumerate} 
\end{theorem}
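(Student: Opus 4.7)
My plan is to prove the equivalence by direct computation with the curvature tensor (\ref{ms1}) of the indefinite Sasakian space form $\overline{M}(c)$. This tensor decomposes into a metric part of coefficient $(c+3)$ and a $\overline{\phi}$-structural part of coefficient $(1-c)$, and the hypothesis $c \neq 1$ is essential because it keeps the structural part non-degenerate, so that geometric information can be extracted from vanishing curvature components.

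For the forward direction, assume $M$ is QGCR. Item (a) is immediate from Definition \ref{def2} once we identify $D_0$ as the maximal non-degenerate $\overline{\phi}$-invariant subbundle of $S(TM)$ and $D_1 \subset \mathrm{Rad}\,TM$ as the lightlike invariant part, so that $\mathrm{Rad}\,TM = D_1 \oplus D_2$. For (b), I would substitute $X, Y \in \Gamma(D_0)$, $E \in \Gamma(\mathrm{Rad}\,TM)$ and $N \in \Gamma(l\mathrm{tr}(TM))$ into (\ref{ms1}) and pair with $N$. Using $D_0 \perp \mathrm{Rad}\,TM$, $D_0 \perp l\mathrm{tr}(TM)$, the $\overline{\phi}$-invariance of $D_0$, and $\eta \circ \overline{\phi} = 0$, the $(c+3)$-block vanishes entirely and all but one term of the $(1-c)$-block drops out, leaving $4\overline{g}(\overline{R}(X,Y)E,N) = 2(c-1)\overline{g}(\overline{\phi}X, Y)\overline{g}(\overline{\phi}E, N)$. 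The QGCR hypothesis $\overline{\phi}D_2 \subset S(TM)$, together with the choice $l\mathrm{tr}(TM) = \mathcal{L} \perp \nu$ with $\overline{\phi}\nu = \nu$, then forces $\overline{g}(\overline{\phi}E, N) = 0$, giving (b). Item (c) follows from a parallel computation: taking $M_2 := \mathcal{S} \subset S(TM^\perp)$, one obtains $4\overline{g}(\overline{R}(X,Y)W, W') = 2(c-1)\overline{g}(\overline{\phi}X, Y)\overline{g}(\overline{\phi}W, W')$, and $\overline{g}(\overline{\phi}W, W') = 0$ because $\overline{\phi}\mathcal{S} \subset S(TM) \perp S(TM^\perp)$.

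For the backward direction, assume (a), (b) and (c). Condition (a) already supplies $\mathrm{Rad}\,TM = D_1 \oplus D_2$ with $\overline{\phi}D_1 = D_1$, and the non-degenerate invariant $D_0 \subset S(TM)$. Reading the formula above in reverse and using that the $2$-form $\overline{g}(\overline{\phi}\cdot,\cdot)$ does not vanish identically on $D_0$ (which follows from $\overline{\phi}D_0 = D_0$, the non-degeneracy of $g|_{D_0}$, and $\eta \circ \overline{\phi} = 0$), the vanishing in (b) forces $\overline{g}(\overline{\phi}E, N) = 0$ for every $E \in \Gamma(\mathrm{Rad}\,TM)$ and $N \in \Gamma(l\mathrm{tr}(TM))$. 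Since $\mathrm{Rad}\,TM$ and $l\mathrm{tr}(TM)$ form a dual null pairing, this means $\overline{\phi}E$ has no $\mathrm{Rad}\,TM$-component; applying this to $E \in \Gamma(D_2)$ and combining with the compatibility $\overline{g}(\overline{\phi}X, \overline{\phi}Y) = \overline{g}(X,Y) - \eta(X)\eta(Y)$ traps $\overline{\phi}D_2$ inside $S(TM)$, establishing the radical part of Definition \ref{def2}. Analogously, (c) yields $\overline{g}(\overline{\phi}W, W') = 0$ for $W, W' \in \Gamma(M_2)$, and combined with $M_2 \perp D$ this identifies $M_2$ with the subbundle $\mathcal{S} \subset S(TM^\perp)$ satisfying $\overline{\phi}\mathcal{S} \subset S(TM)$. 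Defining $\mathcal{L} \subset l\mathrm{tr}(TM)$ as the $\overline{\phi}$-complement of the invariant $\nu$ completes the screen decomposition $S(TM) = \{\overline{\phi}D_2 \oplus \overline{D}\} \perp D_0$ with $\overline{D} = \overline{\phi}\mathcal{S} \oplus \overline{\phi}\mathcal{L}$.

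The main obstacle is the backward direction's passage from the scalar vanishing statements to the precise bundle memberships $\overline{\phi}D_2 \subset S(TM)$ and $\overline{\phi}\mathcal{S} \subset S(TM)$. The curvature identities by themselves only exclude certain components of $\overline{\phi}E$ and $\overline{\phi}W$, so isolating the screen contribution from possible screen-transversal contamination requires a careful simultaneous use of the compatibility (\ref{equa2}) and the skew-adjointness of $\overline{\phi}$ with respect to $\overline{g}$.
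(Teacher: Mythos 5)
Your overall strategy is the paper's: expand $\overline{g}(\overline{R}(X,Y)\cdot,\cdot)$ via (\ref{ms1}), observe that for $X,Y\in\Gamma(D_0)$ everything collapses to a multiple of $(1-c)\,g(\overline{\phi}X,Y)\,\overline{g}(\overline{\phi}\,\cdot,\cdot)$, and use $c\neq 1$ together with the non-vanishing of $g(\overline{\phi}X,Y)$ on $D_0$ to convert the curvature conditions into the vanishing of $\overline{g}(\overline{\phi}E,N)$ and $\overline{g}(\overline{\phi}W,W')$. The forward direction is essentially the paper's computation (the paper, like you, only actually verifies (b) for $E\in\Gamma(D_2)$). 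However, there is one concrete discrepancy: you set $M_2:=\mathcal{S}\subset S(TM^{\perp})$, whereas the statement requires $M_2$ to be a subbundle on $M$ orthogonal to $D$ inside $TM$, and the paper takes $M_2=\overline{\phi}\,\mathcal{S}\subset S(TM)$, writing $W=\overline{\phi}W_1$, $W'=\overline{\phi}W_2$ and computing $\overline{g}(\overline{\phi}W,W')=\overline{g}(\overline{\phi}^{2}W_1,\overline{\phi}W_2)=\overline{g}(\overline{\phi}W_1,W_2)=0$. This is not a cosmetic swap: in the converse, condition (c) must supply the missing piece of the \emph{screen}, and the proof ends by assembling $S(TM)=\{\overline{\phi}D_2\oplus M_1\oplus M_2\}\perp D_0$ with $M_1=\overline{\phi}\mathcal{L}$ and $\mathcal{S}$ recovered from $\overline{\phi}M_2$. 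A transversal $M_2$ carries no information about $S(TM)$ and that decomposition cannot be assembled from it.

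The second, more serious issue is that your converse stops exactly where the work begins: you name the passage from the scalar identities to the bundle memberships as ``the main obstacle'' and defer it, but that passage \emph{is} the paper's proof of the converse. Concretely, the paper (i) uses (b) to get $\overline{g}(\overline{\phi}E,N)=0$, hence kills the radical component of $\overline{\phi}E$ and isolates $D_2$; (ii) rewrites the same identity as $\overline{g}(\overline{\phi}E,N)=\overline{g}(\overline{\phi}^{2}E,\overline{\phi}N)=-\overline{g}(E,\overline{\phi}N)=0$ to place a subbundle $\mathcal{L}$ of $l\mathrm{tr}(TM)$ into $S(TM)$ under $\overline{\phi}$; (iii) uses (c) to get $\overline{\phi}M_2\perp M_2$, and then the computation $g(\overline{\phi}E,W)=-g(E,\overline{\phi}W)=-c_{\alpha}\eta(E)$ to see that $\overline{\phi}D_2$ and $M_2$ sit in a direct (not orthogonal) sum precisely because $\xi$ is not tangent; and (iv) uses the invariance and non-degeneracy of $D_0$ to get $M_2\perp D_0$ and $\overline{\phi}M_2\perp D_0$ before concluding. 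Without carrying out (ii)--(iv) (and with $M_2$ placed on the wrong side of $\overline{\phi}$), the proposal does not establish the ``if'' half of the theorem.
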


\begin{proof}
 Suppose $M$ is a QGCR-lightlike submanifold of $\overline{M}(c)$ with $c\neq1$. Then, $D=D_0\perp D_{1}$ is a maximal invariant subspace. Next, from (\ref{ms1}), for $X,Y\in\Gamma(D_{0})$, $E\in\Gamma(D_{2})$ and $N\in\Gamma(l\mathrm{tr}(TM))$ we have 
 \begin{align*}
  \overline{g}(\overline{R}(X, Y)E, N)& =\frac{c-1}{4}\{\eta(X)\eta(E)\overline{g}(Y,N)-\eta(Y)\eta(E)\overline{g}(X,N)\\    &-2g(\overline{\phi}X,Y)\overline{g}(\overline{\phi}E,N)\}\\
  &=\frac{1-c}{2}g(\overline{\phi}X,Y)\overline{g}(\overline{\phi}E,N).
 \end{align*}
Since $g(\overline{\phi}X,Y)\neq0$ and $\overline{g}(\overline{\phi}E,N)=0$, we have $\overline{g}(\overline{R}(X, Y)E, N)=0$. Similarly, from (\ref{ms1}), one obtains
\begin{equation*}
 \overline{g}(\overline{R}(X,Y)W, W^{\prime})=\frac{1-c}{2}g(\overline{\phi}X,Y)\overline{g}(\overline{\phi}W,W^{\prime}),
\end{equation*}
 $\forall\,X,Y\in\Gamma(D_{0})$ and $W,W'\in\Gamma(\overline{\phi}\mathcal{S})$. Let $W=\overline{\phi}W_{1}$ and $W'=\overline{\phi}W_{2}$ with $W_{1},W_{2}\in\Gamma(\mathcal{S})$. Since $g(\overline{\phi}X,Y)\neq0$ and $\overline{g}(\overline{\phi}W,W')=\overline{g}(\overline{\phi}^{2}W_{1},\overline{\phi}W_{2})=\overline{g}(\overline{\phi}W_{1},W_{2})=0$. Therefore,  we have $\overline{g}(\overline{R}(X, Y)W, W^{\prime})=0$.
 
 Conversely, assume that (a), (b) and (c) are satisfied. Then (a) implies that $D = D_{0}\perp D_{1}$ is invariant. From (b) and (\ref{ms1}) we get 
 \begin{equation}\label{ms2}
  \overline{g}(\overline{\phi}E,N)=0,
 \end{equation}
which implies $\overline{\phi}E \in \Gamma(S(TM))$. Thus, some part of $\mathrm{Rad}\,TM$, say $D_{2}$, belongs to $S(TM)$ under the action of $\overline{\phi}$. Further, (\ref{ms2}) implies $\overline{g}(\overline{\phi}E,N)=\overline{g}(\overline{\phi}^{2}E,\overline{\phi}N)=\overline{g}(-E+\eta(E)\xi,\overline{\phi}N)=-\overline{g}(E,\overline{\phi}N)=0$. Therefore, a part of $l\mathrm{tr}(TM)$, say $\mathcal{L}$, also belongs to $S(TM)$ under the action of $\overline{\phi}$. On the other hand, (c) and (\ref{ms1}) imply $\overline{g}(\overline{\phi}W,W^{\prime})=0$. Hence we obtain $\overline{\phi}M_{2}\perp M_{2}$. Also, $g(\overline{\phi} E, W) = -g(E, \overline{\phi}W ) = -c_{\alpha}\eta(E)$ implies that generally $\overline{\phi}M_{2}\oplus \mathrm{Rad} TM$ or equivalently, $M_{2}\oplus \overline{\phi}\mathrm{Rad} TM$. Now, from $M_{2}\oplus \overline{\phi}\mathrm{Rad} TM$ and the fact that $\overline{\phi}D_{1}=D_{1}$, then $M_{2}\perp D_{1}$ and $M_{2}\oplus \overline{\phi}D_{2}$. This also tells us that $\overline{\phi}M_{2}$ has a component along $l\mathrm{tr}(TM)$, essentially coming from $\xi$. On the other hand, invariant and non-degenerate $D_{0}$ implies $g(\overline{\phi}W, X) = 0$, for $X\in\Gamma(D_{0})$. Thus, $M_{2} \perp D_{0}$ and $\overline{\phi}M_{2} \perp D_{0}$. Since $\xi\in\Gamma(T\overline{M})$, we sum up the above results and conclude that 
\begin{equation*}
 S(T M ) = \{\overline{\phi}D_{2} \oplus M_{1}\oplus M_{2}\}\perp D_{0},
\end{equation*}
where $M_{1}=\overline{\phi}\mathcal{L}$. Hence $M$ is QGCR-lightlike submanifold of $\overline{M}(c)$ and the  proof is completed.
\end{proof}
Note that conditions (b) and (c) are independent of the position of $\xi$ and hence valid for GCR-lightlike submanifolds \cite{ds3} and QGCR-lightlike submanifolds of an indefinte Sasakian space form $\overline{M}(c)$. When  $\xi$ is tangent to $M$, it is well known \cite{ca} that $\xi\in\Gamma(S(TM))$. In this case, one has a GCR-lightlike submanifold, in which $D_{2}\perp\overline{\phi}D_{2}$ is an invariant subbundle of $TM$, leading to $D=D_{1}\perp D_{2}\perp\overline{\phi}D_{2}\perp D_{0}$ as the maximal invariant subspace of $TM$. On the other hand, when $M$ is QGCR-lightlike submanifold then $\xi\in\Gamma(T\overline{M})$ and thus $D_{2}\perp\overline{\phi}D_{2}$ is generally not an invariant subbundle of $TM$ since the action of $\overline{\phi}$ on it gives a component along $\xi$. In particular, let $E\in\Gamma(D_{2})$ then $E+\overline{\phi}E\in\Gamma(D_{2}\perp\overline{\phi}D_{2})$. But on applying $\overline{\phi}$ to this subbundle and considering the fact that $\eta(E)\neq0$ we get $-E+\overline{\phi}E+\eta(E)\xi\notin \Gamma(D_{2}\perp\overline{\phi}D_{2})$. Hence, $D=D_{0}\perp D_{1}$ is the maximal invariant subbundle of $TM$. Further, in the case of QGCR-lightlike submanifold, $\overline{\phi}D_{2}\oplus M_{2}$. In fact, let $\overline{\phi}E\in\Gamma(\overline{\phi}D_{2})$ and $W=\overline{\phi}W_{1}\in\Gamma(M_{2})$, where $W_{1}\in\Gamma(\mathcal{S})$. Then, $\overline{g}(\overline{\phi}E,W)=\overline{g}(\overline{\phi}E,\overline{\phi}W_{1})=-\eta(E)\eta(W_{1})\neq0$. This explains the second direct sum in decomposition  $S(TM)=\{\overline{\phi}D_{2} \oplus M_{1}\oplus M_{2}\}\perp D_{0}$. For the case of GCR-lightlike submanifold, $\eta(E)=\eta(W_{1})=0$, hence $\overline{g}(\overline{\phi}E,W)=\overline{g}(\overline{\phi}E,\overline{\phi}W_{1})=0$. This implies that $\overline{\phi}D_{2}\perp M_{2}$ and hence the first direct orthogonal sum in the decomposition  $S(TM)=\{\overline{\phi}D_{2} \oplus M_{1}\}\perp M_{2}\perp D_{0}\perp \langle\xi\rangle$.

 \section{Integrability of the  distributions  $D$ and $\widehat{D}$}\label{Integra}
 
 Let $M$ be a QGCR-lightlike submanifold of an indefinite nearly Sasakian manifold $(\overline{M}, \overline{g}, \overline{\phi},\xi,\eta)$. From (\ref{eq05}), the tangent bundle of any QGCR lightlike submanifold, $TM$, can be rewritten as
   \begin{equation}\label{eq13}
       TM=D \oplus \widehat{D}, 
   \end{equation}
   where $D=D_0\perp D_{1}$ and $\widehat{D}=\{D_{2}\perp\overline{\phi} D_{2}\} \oplus\overline{D}$.
   
Notice that $D$ is invariant with respect to $\overline{\phi}$ while $\widehat{D}$ is not generally anti-invariant with respect to $\overline{\phi}$. 
 
 Let $\pi$ and $\widehat{\pi}$  be the projections of $TM$ onto $D$ and $\widehat{D}$ respectively. Then, using the first equation of (\ref{eq13}) we can decompose $X$ as   
 \begin{equation}\label{16}
  X=\pi X + \widehat{\pi}X,\;\; \forall X\in \Gamma(TM).
 \end{equation}
 It is easy to see that $\overline{\phi}\pi X\in\Gamma(D)$. However, the action of $\overline{\phi}$ on $\widehat{\pi}X$ gives a tangential and transversal component due to a generalized $\xi$, i.e.,
 \begin{equation}\label{100}
  \overline{\phi} X=P_{1}X+P_{2}X+QX,\;\; \forall X\in \Gamma(TM), 
 \end{equation}
 where $P_{1}X=\overline{\phi}\pi X$ while  $P_{2}X$ is the tangential component of $\overline{\phi}\widehat{\pi}X$ and $QX$ is the transversal component of $\overline{\phi}X$, essentially coming from $\overline{\phi}\widehat{\pi}X$ since $\overline{\phi}D=D$. 
 
 By grouping the tangential and transversal parts in (\ref{100}), it is easy to see that 
  \begin{equation}\label{18}
  \overline{\phi} X=PX+QX,\;\; \forall X\in \Gamma(TM), 
 \end{equation}
 where $PX=P_{1}X+P_{2}X$. 
 
 Note that if  $X\in\Gamma{(D)}$, then $P_{2}X = QX=0$, and $ \overline{\phi} X=P_{1}X$. 
 
 The equation (\ref{18}) can be properly understood through the following specific case of vector field in $\overline{D}\subset\widehat{D}$. Let $\xi_{M}$ and $\xi_{\mathrm{tr} M}$ be the tangential and transversal components of $\xi$. If $X\in\Gamma(\overline{D})$ and since $ \overline{D}= \overline{\phi} \, \mathcal{S}\oplus \overline{\phi} \,\mathcal{L}$, then
 $$
 \overline{\phi} X = S X + L X -\{\eta(SX) + \eta(LY)\}\xi_{M} -\{\eta(SX) + \eta(LY)\}\xi_{\mathrm{tr} M}.
 $$ 
 Consequently, for $X\in\Gamma( \overline{D})$, 
  \begin{align}
    P_{1}X & =0,\nonumber\\
   P_{2}X & =-\{\eta(SX) + \eta(LY)\}\xi_{M},\nonumber\\
   \mbox{and}\;\; QX &=  S X + L X -\{\eta(SX) + \eta(LY)\}\xi_{\mathrm{tr} M}.\nonumber
  \end{align}
Similarly, for any $V\in\Gamma(\mathrm{tr}(TM))$, $V= SV +  LV$, and 
  \begin{equation}
  \overline{\phi} V = t V + f V, 
  \end{equation}
 where $tV$ and $fV$ are the tangential and transversal components of $\overline{\phi} V$, respectively.

   Differentiating (\ref{18}) with respect to $Y$ we get 
   \begin{equation}\label{20}
    \overline{\nabla}_YPX+\overline{\nabla}_YQX= \overline{\nabla}_Y\overline{\phi} X.
   \end{equation}
    Then using (\ref{eq11}),  (\ref{eq31}), (\ref{eq32}) and  (\ref{eqz}) we have
    \begin{equation}\label{21}
    \overline{\nabla}_YPX+\overline{\nabla}_YQX=\nabla_Y PX+h(PX,Y)-A_{QX}Y+\nabla_Y^t QX,
    \end{equation}
  and from (\ref{eqz}), we have;
  \begin{align}\label{22} 
      \overline{\nabla}_Y\overline{\phi} X& =\overline{\phi}(\nabla_YX)+\overline{\phi}(\nabla_XY)+2\overline{\phi} h(X,Y)-\overline{\nabla}_X\overline{\phi} Y\nonumber\\
   &+2\overline{g}(X,Y)\xi_M+2\overline{g}(X,Y)\xi_{\mathrm{tr} M}-\eta(Y)X-\eta(X)Y\nonumber\\
   &=P(\nabla_YX)+Q(\nabla_YX)+P(\nabla_XY)+Q(\nabla_XY)\nonumber\\
   &+2th(X,Y)+2fh(X,Y)-\nabla_XPY-\nabla_X^tQY\nonumber\\
   &-h(X,PY)+A_{QY}X+2\overline{g}(X,Y)\xi_M+2\overline{g}(X,Y)\xi_{\mathrm{tr} M}\nonumber\\
   &-\eta(Y)X-\eta(X)Y. 
     \end{align}
   Finally putting (\ref{21}) and (\ref{22}) in (\ref{20}) and then comparing the tangential and transversal components of the resulting equation, we obtain 
    \begin{align}\label{s102} 
         (\nabla_Y P)X+(\nabla_X P)Y&=  A_{QX}Y+A_{QY}X+2th(X,Y)\nonumber\\
    &+2\overline{g}(X,Y)\xi_M-\eta(X)Y-\eta(Y)X, 
     \end{align}
      and 
     \begin{align}\label{s103} 
        (\nabla_Y^TQ)X+(\nabla_X^TQ)Y& =  -h(PX,Y)-h(X,PY)\nonumber\\
                                    &+2fh(X,Y)+2\overline{g}(X,Y)\xi_{\mathrm{tr} M}, 
     \end{align}
  for all $X,Y\in\Gamma(TM)$, where
  \begin{equation}\label{25}
    (\nabla_Y P)X=\nabla_Y PX-P(\nabla_YX)\;\;\mbox{and}\;\;(\nabla_Y^T Q)X=\nabla_Y^t QX-Q(\nabla_YX).
  \end{equation} 
 \begin{proposition}
   Let $(M,g, S(TM),S(TM^\perp))$ be a QGCR-lightlike submanifold of an indefinite nearly Sasakian manifold $(\overline{M},\overline{\phi},\eta,\xi,\overline{g})$. Then,
    \begin{align}\label{23} 
         P[X,Y]=&\; -\nabla_YPX-\nabla_XPY+2P\nabla_{X}Y +A_{QX}Y+A_{QY}X\nonumber\\
               &+2th(X,Y) +2\overline{g}(X,Y)\xi_M-\eta(X)Y-\eta(Y)X, 
     \end{align}
     and 
     \begin{align}\label{26} 
         Q[X,Y]&=-\nabla_{Y}^tQX-\nabla_X^tQY+2Q\nabla_{X}Y -h(PX,Y)-h(X,PY)\nonumber\\
               &+2fh(X,Y) +2\overline{g}(X,Y)\xi_{\mathrm{tr} M}, 
     \end{align}
  for all $X,Y\in\Gamma(TM)$.
 \end{proposition}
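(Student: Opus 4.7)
The plan is to deduce both identities as direct algebraic consequences of the already-established equations (\ref{s102}) and (\ref{s103}), together with the definitions (\ref{25}). No new geometric computation is required; what is required is a careful rearrangement that converts a symmetric sum into a Lie-bracket expression.

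First I would expand the left-hand side of (\ref{s102}) using the definition $(\nabla_{Y}P)X=\nabla_{Y}PX-P(\nabla_{Y}X)$ and its $X\leftrightarrow Y$ counterpart. This yields
\[
\nabla_{Y}PX+\nabla_{X}PY-P(\nabla_{Y}X)-P(\nabla_{X}Y)=A_{QX}Y+A_{QY}X+2th(X,Y)+2\overline{g}(X,Y)\xi_{M}-\eta(X)Y-\eta(Y)X.
\]
To convert the symmetric combination $P(\nabla_{Y}X)+P(\nabla_{X}Y)$ into $P[X,Y]$, I would use the fact that the induced connection $\nabla$ on $TM$ is torsion-free: this follows because $\overline{\nabla}$ is the Levi-Civita connection of $\overline{g}$ (hence torsion-free) and the second fundamental tensor $h$ appearing in (\ref{eq11}) is symmetric. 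Consequently $[X,Y]=\nabla_{X}Y-\nabla_{Y}X$, and therefore $P(\nabla_{Y}X)=P(\nabla_{X}Y)-P[X,Y]$. Substituting this relation into the displayed equation and solving for $P[X,Y]$ gives precisely (\ref{23}).

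Next I would carry out the same manoeuvre on the transversal identity (\ref{s103}). Expanding $(\nabla_{Y}^{T}Q)X$ and $(\nabla_{X}^{T}Q)Y$ via (\ref{25}) and again using $Q(\nabla_{Y}X)=Q(\nabla_{X}Y)-Q[X,Y]$ (which holds since $Q$ is applied to a tangent vector and $\nabla$ is torsion-free), one isolates $Q[X,Y]$ and obtains (\ref{26}).

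There is essentially no obstacle of a conceptual nature; the only point that merits a line of justification is the torsion-freeness of $\nabla$, since the induced connection on a lightlike submanifold is in general not metric, and one might wonder whether its torsion also fails to vanish. I would therefore include a single sentence pointing out that symmetry of $h$ together with the torsion-freeness of $\overline{\nabla}$ forces $\nabla$ to be torsion-free, after which the two identities fall out by the rearrangement described above.
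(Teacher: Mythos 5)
Your proposal is correct and is exactly the route the paper takes: its proof of this proposition is the one-line remark that the identities follow from (\ref{s102}) and (\ref{s103}), and your rearrangement via (\ref{25}) together with the torsion-freeness of $\nabla$ (a consequence of the symmetry of $h$ and the torsion-freeness of $\overline{\nabla}$) supplies precisely the omitted details. The algebra checks out and reproduces (\ref{23}) and (\ref{26}) verbatim.
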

 \begin{proof}
  The proof follows from (\ref{s102}) and (\ref{s103}).
 \end{proof}
  \begin{theorem}\label{th4}
  Let $(M,g, S(TM),S(TM^\perp))$ be a QGCR-lightlike submanifold of an indefinite nearly Sasakian manifold $(\overline{M},\overline{\phi},\eta,\xi,\overline{g})$. Then, the distribution $D$ is integrable if and only if 
  \begin{align}  
   & h(P_{1} X,Y) + h(X,P_{1} Y)=2(Q\nabla_XY+fh(X,Y)+\overline{g}(X,Y)\xi_{\mathrm{tr} M}), \nonumber\\
   \mbox{and}\;\; & P_{2}[X, Y]=0.   \nonumber
  \end{align}
  for all $X,Y\in \Gamma(D)$.
 \end{theorem}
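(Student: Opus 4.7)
The plan is to reformulate integrability of $D$ in terms of the two decomposition projections $P_2$ and $Q$ of $\overline{\phi}\widehat{\pi}$, and then read off the first stated identity from the already-derived formula~(\ref{26}). Recall from (\ref{eq13}) and (\ref{100}) that $TM=D\oplus\widehat{D}$ with projections $\pi,\widehat{\pi}$, and that $\overline{\phi}\pi X=P_1X\in\Gamma(D)$ while $\overline{\phi}\widehat{\pi}X$ splits as the tangential part $P_2X$ plus the transversal part $QX$. Thus, by definition, $D$ is integrable if and only if $\widehat{\pi}[X,Y]=0$ for all $X,Y\in\Gamma(D)$.

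First I would establish the equivalence
\[
\widehat{\pi}[X,Y]=0\;\;\Longleftrightarrow\;\;\overline{\phi}\,\widehat{\pi}[X,Y]=0\;\;\Longleftrightarrow\;\; P_2[X,Y]=0\;\text{and}\;Q[X,Y]=0.
\]
The second equivalence is immediate, since $P_2[X,Y]$ is tangential and $Q[X,Y]$ is transversal, so they lie in complementary subbundles of $T\overline{M}$ and must vanish independently. The first equivalence reduces to showing $\widehat{D}\cap\ker\overline{\phi}=\{0\}$, and since $\ker\overline{\phi}=\mathrm{span}(\xi)$ by (\ref{equa1}), this boils down to a short case analysis: if $\xi$ has a nontrivial transversal component, then $TM\cap\mathrm{span}(\xi)=\{0\}$; if $\xi$ is tangent to $M$, the proposition already proved in this section places $\xi\in D_0\subset D$, so $\widehat{D}\cap\mathrm{span}(\xi)=\{0\}$ in either case.

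Next I would specialize the identity~(\ref{26}) to $X,Y\in\Gamma(D)$. Since then $\widehat{\pi}X=\widehat{\pi}Y=0$, we have $QX=QY=0$ (hence $\nabla^t_XQY=\nabla^t_YQX=0$) and $PX=P_1X$, $PY=P_1Y$. Substituting these into (\ref{26}) yields
\begin{equation*}
Q[X,Y]=2Q\nabla_XY-h(P_1X,Y)-h(X,P_1Y)+2fh(X,Y)+2\overline{g}(X,Y)\xi_{\mathrm{tr}M},
\end{equation*}
so $Q[X,Y]=0$ is exactly the first displayed condition. The condition $P_2[X,Y]=0$ is then precisely the remaining requirement coming from the tangential component in the equivalence above, and combining the two establishes the theorem in both directions.

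The main obstacle I anticipate is not the algebraic manipulation, which is essentially a direct substitution into~(\ref{26}), but rather the step verifying that $\overline{\phi}\widehat{\pi}[X,Y]=0$ actually forces $\widehat{\pi}[X,Y]=0$. In the classical GCR setting this is automatic because $\xi\in S(TM)$ lies inside the invariant distribution and $\overline{\phi}$ is injective on the anti-invariant complement; in the QGCR setting $\xi$ is only in $T\overline{M}$, so one must handle the position of $\xi$ via the earlier structural propositions before the integrability characterization reduces cleanly to the two stated conditions.
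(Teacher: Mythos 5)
Your proof is correct and is precisely the ``straightforward calculation'' the paper leaves to the reader: specializing (\ref{26}) to $X,Y\in\Gamma(D)$ (where $QX=QY=0$ and $PX=P_{1}X$, $PY=P_{1}Y$) identifies the vanishing of $Q[X,Y]$ with the first displayed identity, while the decomposition $\overline{\phi}\,\widehat{\pi}[X,Y]=P_{2}[X,Y]+Q[X,Y]$ into tangential and transversal parts turns $\widehat{\pi}[X,Y]=0$ into the conjunction of the two stated conditions. The only non-automatic point is the implication $\overline{\phi}\,\widehat{\pi}[X,Y]=0\Rightarrow\widehat{\pi}[X,Y]=0$, which needs $\ker\overline{\phi}=\langle\xi\rangle$ to meet $\widehat{D}$ trivially; you correctly isolate this and resolve it using the paper's earlier observation that a tangent $\xi$ lies in $D_{0}\subset D$, so your argument both matches the intended route and supplies the justification the paper omits.
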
 
  \begin{proof} 
  The proof is a straightforward calculation.
 \end{proof} 
 
 The integrability of $\widehat{D}$ is discussed as follows. Note that the distribution $\widehat{D}$ is integrable if and only if, for any $X$, $Y\in\Gamma(\widehat{D})$, $[X, Y]\in\Gamma(\widehat{D})$. The latter is equivalent to $P_{1}[X, Y]=0$.
 \begin{theorem}\label{th5}
  Let $(M,g, S(TM),S(TM^\perp))$ be a QGCR-lightlike submanifold of an indefinite nearly Sasakian manifold $(\overline{M},\overline{\phi},\eta,\xi,\overline{g})$. Then, the distribution $\widehat{D}$ is integrable if and only if 
  \begin{align*}
 &A_{QX}Y+A_{QY}X-\nabla_YP_{2}X-\nabla_XP_{2}Y\\
 &+2(P_{1}(\nabla_{X}Y)+\overline{g}(X,Y)\xi_M+th(X,Y))\in\Gamma(\widehat{D}),
  \end{align*}
  for all $X,Y\in \Gamma(\widehat{D})$.
 \end{theorem}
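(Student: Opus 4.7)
The plan is to specialize identity (\ref{23}) of the preceding proposition to pairs $X, Y \in \Gamma(\widehat{D})$ and then read off the integrability criterion directly. The key preliminary observation is that $\widehat{D}$ is involutive if and only if $P_1[X,Y] = 0$ for all $X, Y \in \Gamma(\widehat{D})$. Indeed, $P_1 Z = \overline{\phi}\pi Z$, and $\overline{\phi}$ restricts to an isomorphism of the invariant subbundle $D = D_0 \perp D_1$: its only potential kernel direction is $\xi$, which under the QGCR hypothesis lies in $D$ only in the degenerate GCR subcase already analysed. Hence the vanishing of $P_1[X,Y]$ forces $\pi[X,Y] = 0$, i.e.\ $[X,Y] \in \Gamma(\widehat{D})$.

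Next, for $X, Y \in \Gamma(\widehat{D})$ one has $\pi X = \pi Y = 0$, so $PX = P_2 X$, $PY = P_2 Y$ and $P\nabla_X Y = P_1\nabla_X Y + P_2\nabla_X Y$. Substituting these into (\ref{23}) and grouping the terms that form the expression
\begin{equation*}
E(X,Y) := A_{QX}Y + A_{QY}X - \nabla_Y P_2 X - \nabla_X P_2 Y + 2\bigl(P_1\nabla_X Y + \overline{g}(X,Y)\xi_M + t\,h(X,Y)\bigr)
\end{equation*}
named in the theorem, I would rewrite (\ref{23}) in the form
\begin{equation*}
P_1[X,Y] + P_2[X,Y] = E(X,Y) + 2 P_2 \nabla_X Y - \eta(X) Y - \eta(Y) X,
\end{equation*}
whence, using $2\nabla_X Y - [X,Y] = \nabla_X Y + \nabla_Y X$,
\begin{equation*}
P_1[X,Y] \;=\; E(X,Y) + P_2(\nabla_X Y + \nabla_Y X) - \eta(X) Y - \eta(Y) X.
\end{equation*}

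The final step is to argue that the correction $P_2(\nabla_X Y + \nabla_Y X) - \eta(X)Y - \eta(Y)X$ sits in $\Gamma(\widehat{D})$: the $\eta$-terms are manifestly scalar multiples of elements of $\widehat{D}$, while the $P_2$-piece can be analysed subbundle-by-subbundle using the decomposition $\widehat{D} = \{D_2 \perp \overline{\phi}D_2\} \oplus \overline{D}$ and $\overline{\phi}^2 = -\mathbb{I} + \eta \otimes \xi$ to see that its image is tangential along $\widehat{D}$. Granted this, projecting the displayed identity by $\pi$ yields $P_1[X,Y] = \pi E(X,Y)$, so $P_1[X,Y] = 0$ if and only if $E(X,Y) \in \Gamma(\widehat{D})$, which is the asserted criterion. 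The main obstacle is precisely this bookkeeping step: the $\xi_M$-contributions appearing in the tangential parts of $\overline{\phi}\overline{D}$ and $\overline{\phi}(\overline{\phi}D_2)$ could a priori leak into $D$, and one must check case-by-case, against the position of $\xi$ (tangent, transversal or ascreen), that no such leakage occurs under the standing QGCR hypothesis. Once this containment is in place, the equivalence falls out by a direct rearrangement of (\ref{23}).
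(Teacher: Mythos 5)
Your proposal is correct and follows essentially the same route as the paper: both specialize identity (\ref{23}) to $X,Y\in\Gamma(\widehat{D})$ (so $PX=P_2X$, $PY=P_2Y$), isolate the expression $E(X,Y)$ of the theorem, and reduce the claim to the observation that the leftover terms ($P_2$-terms, $\eta$-terms, and the transversal part) do not contribute to the $D$-component, so that $P_1[X,Y]=0$ iff $E(X,Y)\in\Gamma(\widehat{D})$. Your extra rearrangement via $2\nabla_XY-[X,Y]=\nabla_XY+\nabla_YX$ is a harmless cosmetic variant, and the containment step you flag as the ``main obstacle'' is exactly the step the paper itself dispatches with ``it is obvious,'' so you are at the same level of rigor.
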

  \begin{proof}
 Let  $X,Y\in \Gamma(\widehat{D})$, then it is easy to see that $P_{1}X=P_{1}Y=0$. Hence, $PX=P_{2}X$ and  $PY=P_{2}Y$. Now using (\ref{23}), we derive
   \begin{align}\label{s107}
   \overline{\phi}[X,Y]&=P[X,Y]+Q[X,Y]\nonumber\\
                       &=-\nabla_YPX-\nabla_XPY+2P\nabla_{X}Y+A_{QX}Y\nonumber\\
                       &+A_{QY}X+2th(X,Y) +2\overline{g}(X,Y)\xi_M-\eta(X)Y\nonumber\\
                       &-\eta(Y)X+Q[X,Y]\nonumber\\
                       &=-\nabla_YP_{2}X-\nabla_XP_{2}Y+2P_{1}\nabla_{X}Y+A_{QX}Y\nonumber\\
                       &+A_{QY}X+2th(X,Y) +2\overline{g}(X,Y)\xi_M+2P_{2}\nabla_{X}Y\nonumber\\
                       &-\eta(X)Y-\eta(Y)X+Q[X,Y].
  \end{align}
It is obvious from (\ref{s107}) that the last four terms belongs to $\widehat{D}$. Hence, the assertation follows from the remaining terms. 
 \end{proof} 
Let consider the lightlike submanifold $M$ given in Example \ref{exa1}. The distribution $D$ is spanned by $\{E_{1}, E_{2}, X_{3}, X_{4}\}$ while $\widehat{D}$ is spanned by $\{E_{3},\overline{\phi}_0 E_{3}, \overline{\phi}_0 W\}$. By straightforward calculations, we can see that $[E_{1},E_{2}]=2\partial z=2\xi$. Thus, $[E_{1},E_{2}]$ does not belong to $D$ and hence non integrable. On the other hand, $[E_3,\overline{\phi}_0 E_{3}]=-[E_3,X_{2}]=\cos^2\theta\partial z=\cos^2\theta\xi$. Since $\xi$ does not belong to $\widehat{D}$, we can see that $\widehat{D}$ is not integrable.

\section*{Acknowledgments}
S. Ssekajja is grateful to the African Institute of Mathematical Sciences, AIMS, in  S\'en\'egal and the University of KwaZulu-Natal, South Africa, for their financial support during this research. The authors thank the referees for helping them to improve the presentation.


\begin{thebibliography}{xxx}
 
\bibitem{mo} M. Ahmad and M. D. Siddiqui, On a Nearly Sasakian manifold with a semi-symmetric semi-metric connection, Int. Journal of Math. Analysis, Vol. 4, 2010, no. 35, 1725 - 1732.   2002.
\bibitem{bl1} D. E. Blair, D. K. Showers and K. Yano, Nearly Sasakian structures, Kodai Math. Sem. Rep., 27 (1976), 175-180.  
\bibitem{bl2} D. E. Blair, Riemannian geometry of contact and symplectic manifolds, Progress in Mathematics 203. Birkhauser Boston, Inc., Boston, MA, 2002.
\bibitem{bcgh}  A. Bonome, R. Castro, E.  Garc\'{\i}a-Rio, L. Hervella,   Curvature of indefinite almost contact manifolds, J. geom., 58 (1997), 66-86. 
\bibitem{ca} C. Calin, Contributions to geometry of CR-submanifold, Ph.D. Thesis, University
\bibitem{be} B. Cappelletti-Montano and G. Dileo, Nearly Sasakian geometry and $SU(2)$--structures,  Annali di Matematica Pura ed Applicata, to appear, (doi: 10.1007/s10231-015-0496-9). 
\bibitem{db} K. L. Duggal and A. Bejancu, Lightlike submanifolds of semi-Riemannian  manifolds and applications, Mathematics and Its Applications, Kluwer Academic Publishers, 1996. 
\bibitem{dj}  K. L. Duggal and  D. H. Jin, Null curves and hypersurfaces of semi-Riemannian manifolds, World Scientific Publishing Co. Pte. Ltd, 2007.
\bibitem{ds1} K. L. Duggal and B. Sahin, Lightlike Submanifolds of Indefinite Sasakian Manifolds, Internat. J. Math. Math. Sci., vol. 2007, Article ID 57585, 21 pages.
\bibitem{ds2} K. L. Duggal and B. Sahin,  Differential geometry of lightlike submanifolds, Frontiers in Mathematics, Birkh\"auser Verlag, Basel, 2010.  
\bibitem{ds3} K. L. Duggal and B. Sahin, Generalised Cauchy--Riemannian lightlike submanifolds of indefinite Sasakian manifolds, Acta Math. Hungar., 122 (1--2). vol. 2008, 16 pages. 
\bibitem{ds4} K. L. Duggal and D. H. Jin, Generic lightlike submanifolds of indefinite Sasakian manifolds, Int. Electronic Journal of geometry. Vol. 5 No.1 pp. 108-119 (2012). 
\bibitem{ji} D.H. Jin, Indefinite trans-Sasakian manifold admitting an ascreen Half lightlike submanifold, Commun. Korean Math. Soc. 29 (2014), No. 3, pp. 451-461.  
\bibitem{ma1} F. Massamba, Totally contact umbilical lightlike hypersurfaces of indefinite Sasakian manifolds, Kodai Math. J., 31 (2008), 338-358.
\bibitem{ma2} F. Massamba, On semi-parallel lightlike hypersurfaces of indefinite Kenmotsu manifolds, J. Geom., 95 (2009), 73-89.
\bibitem{ma2} F. Massamba, On lightlike geometry in indefinite Kenmotsu manifolds. Math. Slovaca 62 (2) (2012), 315-344.
\bibitem{ma3} F. Massamba, Symmetries of null geometry in indefinite Kenmotsu manifolds. Mediterr.,  J. Math. 10 (2) (2013), 1079-1099.
\bibitem{mt}  F. Massamba and T. Tshikuna-Matamba,  Horizontally submersions of contact CR-submanifolds. Turkish J. Math., 38 (2014), no. 3, 436-453
\bibitem{ta} A. Tarafdar and A. Mayra, On Nearly Sasakian manifold, Tomul XLV, s.I a, Matematica, 1999, f.2, 1 - 4.
\bibitem{yk} K. Yano and M. Kon, Structures on Manifolds, vol. 3 of Series in Pure Mathematics, World Scientific, Singapore, 1984.
\end{thebibliography}
\end{document}